\theoremstyle{plain}
\newtheorem{thm}{Theorem}[section]
\newtheorem{prop}[thm]{Proposition}
\newtheorem{lemma}[thm]{Lemma}
\newtheorem{cor}[thm]{Corollary}
\theoremstyle{definition}
\newtheorem{defn}[thm]{Definition}
\theoremstyle{remark}
\newtheorem{rmk}[thm]{Remark}
\newcommand{\vol}{\ensuremath{\mathsf{vol}}}
\newcommand{\hol}{\ensuremath{\mathrm{Hol}_g (M)}}
\newcommand{\G}{\ensuremath{\operatorname{G_2}}}
\newcommand{\SP}{\ensuremath{\operatorname{Spin}(7)}}
\newcommand{\Gs}{\ensuremath{\operatorname{G_2}}{-structure}}
\newcommand{\SPs}{\ensuremath{\operatorname{Spin}(7)}{-structure}}
\newcommand{\R}{\ensuremath{\mathbb R}}
\newcommand{\Ph}{\ensuremath{\Phi}}
\newcommand{\wone}{\ensuremath{\Omega^1_8}}
\newcommand{\wtws}{\ensuremath{\Omega^2_7}}
\newcommand{\wtwt}{\ensuremath{\Omega^2_{21}}}
\newcommand{\wthe}{\ensuremath{\Omega^3_8}}
\newcommand{\wthf}{\ensuremath{\Omega^3_{48}}}
\newcommand{\wfoo}{\ensuremath{\Omega^4_1}}
\newcommand{\wfos}{\ensuremath{\Omega^4_7}}
\newcommand{\wfot}{\ensuremath{\Omega^4_{27}}}
\newcommand{\wfoth}{\ensuremath{\Omega^4_{35}}}
\newcommand{\st}{\ensuremath{\ast}}
\newcommand{\hk}{\mathbin{\! \hbox{\vrule height0.3pt width5pt depth 0.2pt \vrule height5pt width0.4pt depth 0.2pt}}}
\newcommand{\lies}{\ensuremath{\operatorname{\mathfrak {so}(7)}}}
\newcommand{\tr}{\ensuremath{\operatorname{Tr}}}
\newcommand{\ddx}[1]{\ensuremath{\frac{\del}{\del\:\!\! x^{#1}}}}
\newcommand{\dx}[1]{\ensuremath{d\:\!\! x^{#1}}}
\newcommand{\nab}[1]{\ensuremath{\nabla_{\! \! #1 \,}}}
\newcommand{\del}{\ensuremath{\partial}}
\newcommand{\ddt}{\ensuremath{\frac{\partial}{\partial t}}}
\newcommand{\dl}{\ensuremath{\delta}}
\newcommand{\adm}{\ensuremath{\mathcal{A}}}
\numberwithin{equation}{section}
\numberwithin{table}{section}
\numberwithin{figure}{section}
\begin{document}

\title{Flows of \SP-structures}
\author{Spiro Karigiannis \\ {\em Mathematical Institute, University of Oxford} \\ {\tt karigiannis@maths.ox.ac.uk} }

\maketitle

\begin{abstract}
We consider flows of \SPs s.  We use local coordinates to describe
the torsion tensor of a \SPs\ and derive the evolution equations
for a general flow of a \SPs\ $\Ph$ on an $8$-manifold $M$.
Specifically, we compute the evolution of the metric and the
torsion tensor. We also give an explicit description of the decomposition
of the space of forms on a manifold with \SPs, and derive an analogue
of the second Bianchi identity in \SP-geometry. This identity
yields an explicit formula for the Ricci tensor and
part of the Riemann curvature tensor in terms of the torsion.
\end{abstract}

\section{Introduction} \label{introsec}

This paper discusses general flows of \SPs s in a manner
similar to the author's analogous results for flows of \Gs s, which
were studied in~\cite{K3}. Many of the calculations are similar in
spirit, although more involved, so we often omit proofs. The reader is
advised to familiarize themselves with~\cite{K3} first.

A general evolution of a \SPs\ is described by a symmetric tensor
$h$ and a skew-symmetric tensor $X$ satisfying some further algebraic
condition, and it is only $h$ which affects the evolution of the
associated Riemannian metric. However, the evolution of the
torsion tensor is determined by both $h$ and $X$.

In Section~\ref{SPstructuressec}, we review \SPs s, the
decomposition of the space of forms, and the torsion tensor
of a \SPs. In Section~\ref{generalflowssec} we compute the evolution
equations for the metric and the torsion tensor for a general flow of
\SPs s. In Section~\ref{bianchisec}, we apply our evolution equations
to derive a Bianchi-type identity in \SP-geometry.
This leads to an explicit formula for the Ricci tensor of a general \SPs\
in terms of the torsion. An Appendix collects various identities in
\SP-geometry.

The notation used in this paper is identical to that of~\cite{K3}.
Throughout this paper, $M$ is a (not necessarily compact) smooth
manifold of dimension $8$ which admits a \SPs.

{\bf Acknowledgements.}  This article is based on a talk given by the
author at the 10th International Conference on Differential Geometry
and its Applications, in honour of the $300^{\mathrm{th}}$ anniversary
of the birth of Leonhard Euler, Czech Republic.

\section{Manifolds with \SPs} \label{SPstructuressec}

In this section we review the concept of a \SPs\ on a manifold $M$ and
the associated decompositions of the space of forms. More details
about \SPs s can be found, for example, in~\cite{BS, J1, J4, K1}. We
also describe explicitly the torsion tensor associated to a \SPs.

Consider an $8$-manifold $M$ with a \SP\ structure
$\Ph$. The existence of such a structure is a {\it topological condition}.
The space of $4$-forms $\Ph$ on $M$ which determine a \SPs\ is a
subbundle $\adm$ of the bundle $\Omega^4$ of $4$-forms on $M$,
called the bundle of {\it admissible\/} $4$-forms. This is {\it not\/}
a vector subbundle, and unlike the \G\ case, it is not even an
open subbundle.

A \SPs\ $\Ph$ determines a Riemannian metric $g_{\Ph}$ and an
orientation in a non-linear fashion which we now describe. Let $v$ be
a non-zero tangent vector at a point $p$, and extend to $(v, e_1,
\ldots, e_7)$ a local frame near $p$. We define
\begin{eqnarray*}
B_{ij}(v) & = & \left( (e_i \hk v \hk \Ph)
\wedge (e_j \hk v \hk \Ph) \wedge (v \hk \Ph) \right)
(e_1, \ldots, e_7) \\ A(v) & = & \left( (v \hk \Ph) \wedge \Ph \right)
(e_1, \ldots, e_7)
\end{eqnarray*}
Then the metric $g_{\Ph}$ is defined by
\begin{equation} \label{gijeq}
{\left( g_{\Ph}(v,v) \right)}^2 = -\frac{7^3}{6^{\frac{7}{3}}} \,
\frac{(\det B_{ij}(v))^{\frac{1}{3}}}{A(v)^3}
\end{equation}
More details can be found in~\cite{K1}, although that paper uses
a different orientation convention (see also~\cite{K2}.) We will not
have need for this explicit formula.

The metric $g_{\Ph}$ and orientation (determined by the volume form)
determine a Hodge star operator $\st$, and the $4$-form $\Ph$ is {\it
self-dual}. That is, $\st \Ph = \Ph$. The metric also determines the
Levi-Civita connection $\nabla$, and the manifold $(M, \Ph)$ is called
a \SP\ manifold if $\nab{} \Ph = 0$. This is a nonlinear partial
differential equation for $\Ph$, since $\nabla$ depends on $g$ which
depends non-linearly on $\Ph$. Such manifolds (where $\Ph$ is
parallel) have Riemannian holonomy $\hol$ contained in the group $\SP
\subset \mathrm{SO}(8)$. A parallel \SPs\ is also called
{\it torsion-free}.

\subsection{Decomposition of the space of forms} \label{formssec}

The existence of a \SPs\ $\Ph$ on $M$ (with no condition on $\nab{}
\Ph$) determines a decomposition of the spaces of differential
forms on $M$ into irreducible \SP\ representations.
We will see explicitly that the spaces $\Omega^2$, $\Omega^3$, and 
$\Omega^4$ decompose as
\begin{align*}
& \Omega^2 = \wtws \oplus \wtwt \qquad \qquad \Omega^3 = \wthe \oplus \wthf \\
& \qquad \Omega^4 = \wfoo \oplus \wfos \oplus \wfot \oplus \wfoth
\end{align*}
where $\Omega^k_l$ has (pointwise) dimension $l$ and this
decomposition is orthogonal with respect to the metric $g$.
For $k=2$ and $k=3$, the explicit descriptions are as follows:
\begin{align}
\label{wtwdesc} & \wtws \, = \, \{ \beta \in \Omega^2; \, \st ( \Ph
\wedge \beta ) = -3 \beta \} \quad \quad \wtwt \, = \, \{ \beta
\in \Omega^2; \, \st ( \Ph \wedge \beta) = \beta \} \\
\label{wthdesc} & \wthe \, = \, \{ X \hk \Ph;\, X \in \Gamma(TM) \}
\qquad \qquad \, \, \wthf \, = \, \{ \gamma \in \Omega^3;
\, \gamma \wedge \Ph = 0 \}
\end{align}
For $k>4$, we have $\Omega^k_l = \st \Omega^{8-k}_l$.

We need these decompositions in local coordinates. The following
proposition is easy to verify.

\begin{prop} \label{w23prop}
Let $\beta_{ij}$ be a $2$-form, $\gamma_{ijk}$ a $3$-form, and $X^k$ a
vector field. Then
\begin{align*}
& \beta_{ij} \in \wtws \, \Leftrightarrow \, \beta_{ab} g^{ap} g^{bq}
\Ph_{pqij} = -6 \, \beta_{ij} & \beta_{ij} \in \wtwt \, \Leftrightarrow \,
\beta_{ab} g^{ap} g^{bq} \Ph_{pqij} = 2 \, \beta_{ij} \\
& \gamma_{ijk} \in \wthe \, \Leftrightarrow \, 
\gamma_{ijk} = X^l \Ph_{ijkl} & \gamma_{ijk} \in \wthf \,
\Leftrightarrow \, \gamma_{ijk} g^{ia} g^{jb} g^{kc} \Ph_{abcd} = 0
\end{align*}
and the projection operators $\pi_7$ and $\pi_{21}$ on $\Omega^2$ are
given by
\begin{eqnarray} \label{wtwsprojeq}
\pi_7 (\beta)_{ij} & = & \frac{1}{4} \, \beta_{ij} - \frac{1}{8} \, 
\beta_{ab} g^{ap} g^{bq} \Ph_{pqij} \\ \label{wtwtprojeq} \pi_{21}
(\beta)_{ij} & = & \frac{3}{4} \, \beta_{ij} + \frac{1}{8} \, \beta_{ab}
g^{ap} g^{bq} \Ph_{pqij}
\end{eqnarray}
\end{prop}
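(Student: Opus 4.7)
The plan is to translate each of the coordinate-free descriptions \eqref{wtwdesc} and \eqref{wthdesc} into local coordinates, and then deduce the projection formulas by diagonalizing an auxiliary endomorphism.

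Introduce the $g$-symmetric endomorphism $L : \Omega^2 \to \Omega^2$ defined by $L(\beta)_{ij} := \beta_{ab}\, g^{ap} g^{bq}\, \Ph_{pqij}$. The central calculation is the identity
$$\st(\Ph \wedge \beta)_{ij} \;=\; \tfrac{1}{2}\, L(\beta)_{ij},$$
which I would verify by expanding the wedge product via the antisymmetrization convention $(\Ph \wedge \beta)_{i_1 \cdots i_6} = \tfrac{6!}{4!\,2!}\, \Ph_{[i_1 \cdots i_4}\, \beta_{i_5 i_6]}$ together with the coordinate formula $(\st \omega)_{ij} = \tfrac{1}{6!}\, \omega^{k_1 \cdots k_6}\, \epsilon_{k_1 \cdots k_6 ij}$, and then collapsing the resulting $\Ph \cdot \epsilon$ contraction using the self-duality $\st \Ph = \Ph$. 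Once this identity is in hand, the eigenvalue conditions in \eqref{wtwdesc} become $L(\beta) = -6 \beta$ on $\wtws$ and $L(\beta) = 2\beta$ on $\wtwt$, which are precisely the stated coordinate characterizations.

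For the projection formulas, since $\Omega^2 = \wtws \oplus \wtwt$ is an orthogonal decomposition and $L$ has exactly two eigenvalues $-6$ and $+2$ on these two summands, Lagrange interpolation gives
\begin{eqnarray*}
\pi_7 & = & \frac{L - 2\,\mathrm{Id}}{-6 - 2} \;=\; \tfrac{1}{4}\,\mathrm{Id} \;-\; \tfrac{1}{8}\,L, \\
\pi_{21} & = & \frac{L + 6\,\mathrm{Id}}{2 + 6} \;=\; \tfrac{3}{4}\,\mathrm{Id} \;+\; \tfrac{1}{8}\,L.
\end{eqnarray*}
Substituting the definition of $L$ recovers \eqref{wtwsprojeq} and \eqref{wtwtprojeq}; as a consistency check one sees $\pi_7 + \pi_{21} = \mathrm{Id}$.

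For the $\Omega^3$ decomposition, the coordinate form of $\wthe$ is immediate from $(X \hk \Ph)_{ijk} = X^l\, \Ph_{lijk}$ after cycling the $l$ index to the final position, which introduces a sign absorbed into $X$. For $\wthf$, the form $\gamma \wedge \Ph$ lies in $\Omega^7$, and applying $\st$ turns the vanishing condition into the vanishing of a $1$-form whose $d$-th component is a nonzero constant multiple of $\gamma_{ijk}\, g^{ia} g^{jb} g^{kc}\, \Ph_{abcd}$; the dimension count $56 - 8 = 48 = \dim \wthf$ confirms that this linear condition cuts out precisely $\wthf$. The only real obstacle throughout is bookkeeping the combinatorial constants that produce the coefficient $\tfrac{1}{2}$ in the key identity above; once that is pinned down, everything else is routine linear algebra.
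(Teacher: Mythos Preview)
Your argument is correct. The paper does not actually supply a proof of this proposition; it simply states that it is ``easy to verify.'' Your approach --- introducing the endomorphism $L(\beta)_{ij}=\beta_{ab}g^{ap}g^{bq}\Ph_{pqij}$, establishing $\st(\Ph\wedge\beta)=\tfrac12 L(\beta)$ from self-duality $\st\Ph=\Ph$, and then reading off the projections $\pi_7,\pi_{21}$ by Lagrange interpolation on the eigenvalues $-6$ and $2$ --- is exactly the kind of verification the paper has in mind, and it is carried out cleanly. The treatment of $\wthe$ and $\wthf$ is likewise correct; note that for $\wthf$ the dimension count is only a sanity check and not logically needed, since $\gamma\wedge\Ph=0$ is already the defining condition in \eqref{wthdesc} and your Hodge-dual computation translates it directly into the stated index contraction.
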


\begin{rmk} \label{liesrmk}
One can show using Proposition~\ref{w23prop} and
Lemma~\ref{SPidentitieslemma} that if $\beta_{ij} \in \wtwt$,
\begin{equation*}
\beta_{ab} g^{bl} \Ph_{lpqr} = \beta_{pi} g^{ij} \Ph_{jqra} +
\beta_{qi} g^{ij} \Ph_{jrpa} + \beta_{ri} g^{ij} \Ph_{jpqa}
\end{equation*}
which can then be used to show that $\wtwt$ is a Lie algebra with
respect to the commutator of matrices:
\begin{equation*}
[ \beta, \mu]_{ij} = \beta_{il} g^{lm} \mu_{mj} - \mu_{il} g^{lm}
\beta_{mj}
\end{equation*}
In fact, $\wtwt \cong \lies$, the Lie algebra of $\SP$.
\end{rmk}

The decomposition of the space $\Omega^4$ of $4$-forms can be
understood by considering the infinitesmal action of $\mathrm{GL}(8,
\mathbb R)$ on $\Ph$. Let $A = A^i_l \in \mathfrak{gl}(8, \mathbb R)$.
Hence $e^{At} \in \mathrm{GL}(8, \mathbb R)$, and we have
\begin{equation*}
e^{A t} \cdot \Ph = \frac{1}{24} \, \Ph_{ijkl} \, (e^{At} \dx{i}) \wedge 
(e^{At} \dx{j}) \wedge (e^{At} \dx{k}) \wedge (e^{At} \dx{l}) 
\end{equation*}
Differentiating with respect to $t$ and setting $t = 0$, we obtain:
\begin{equation*}
\left. \frac{d}{dt} \right|_{t=0} (e^{At} \cdot \Ph) =
A^m_i \, \dx{i} \wedge \left( \ddx{m} \hk \Ph \right)
\end{equation*}
Now let $A^m_i = g^{mj} A_{ij}$, and decompose $A_{ij} = S_{ij} +
C_{ij}$ into symmetric and skew-symmetric parts, where $S_{ij} =
\frac{1}{2} (A_{ij} + A_{ji})$ and $C_{ij} = \frac{1}{2} (A_{ij} -
A_{ji})$. We have a map
\begin{eqnarray*}
D & : &  \mathfrak{gl}(8, \mathbb R) \to \Omega^4 \\
D & : & A \mapsto \left. \frac{d}{dt} \right|_{t=0} (e^{At} \cdot \Ph) =
S_{ij} g^{jm} \, \dx{i} \wedge \left( \ddx{m} \hk \Ph \right) +
C_{ij} g^{jm} \, \dx{i} \wedge \left( \ddx{m} \hk \Ph \right)  
\end{eqnarray*}
\begin{prop} \label{wtwfdescprop} The kernel of $D$ is isomorphic to
the subspace $\wtwt$. It is also isomorphic to the Lie algebra
$\lies$ of the Lie group \SP\ which is the subgroup of
$\mathrm{GL}(8, \mathbb R)$ which preserves $\Ph$.
\end{prop}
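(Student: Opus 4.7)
The plan is to interpret $\ker D$ as the Lie algebra of the stabilizer of $\Ph$ under the $GL(8, \mathbb R)$-action on $4$-forms, and then identify this stabilizer separately with $\lies$ and with $\wtwt$.

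Since $D(A)$ is by construction the derivative at $t = 0$ of the orbit $t \mapsto e^{At} \cdot \Ph$, the kernel of $D$ is the Lie algebra of the isotropy subgroup $\{g \in GL(8, \mathbb R) : g \cdot \Ph = \Ph\}$. By the very definition of admissibility, this stabilizer is the group $\SP \subset GL(8, \mathbb R)$, which gives $\ker D \cong \lies$ at once. Moreover, since $\SP \subset SO(8)$, every $A \in \ker D$ is skew-symmetric after lowering an index with $g$, and so corresponds via $A^m_i = \beta_{ij} g^{jm}$ to some $2$-form $\beta$; under this identification $\ker D$ becomes a $21$-dimensional subspace of $\Omega^2 = \wtws \oplus \wtwt$. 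Since $\dim \wtwt = 21$ as well, to conclude $\ker D = \wtwt$ it suffices to show $\wtwt \subseteq \ker D$. I would prove this by expanding
\[
D(\beta)_{apqr} = \beta_{ai} g^{ij} \Ph_{jpqr} - \beta_{pi} g^{ij} \Ph_{jaqr} + \beta_{qi} g^{ij} \Ph_{japr} - \beta_{ri} g^{ij} \Ph_{japq},
\]
substituting the identity from Remark \ref{liesrmk} into the first term, and observing that the four resulting pairs of terms collapse using only the total antisymmetry of $\Ph$.

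The main obstacle is the signed index bookkeeping in this final cancellation: after the substitution, one must recognize for example that $\Ph_{jqra} = \Ph_{jaqr}$ (since $(q,r,a) \mapsto (a,q,r)$ is an even $3$-cycle) but $\Ph_{jrpa} = -\Ph_{japr}$ (a single transposition), and track all such signs consistently across the four-term sum. A more conceptual alternative is to note that $\mathfrak{so}(8) \cong \wtws \oplus \wtwt$ is the decomposition into irreducible $\SP$-modules of dimensions $7$ and $21$, whence the $\SP$-stable $21$-dimensional subspace $\ker D$ of $\Omega^2$ must coincide with $\wtwt$.
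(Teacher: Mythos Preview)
Your proposal is correct and follows essentially the same strategy as the paper: identify $\ker D$ with $\lies$ as the infinitesimal stabilizer of $\Ph$, then show $\wtwt \subseteq \ker D$ by an explicit index computation, and conclude equality. The only minor differences are that the paper substitutes the eigenvalue characterization $C_{ij} = \tfrac{1}{2}\, C_{ab}\, g^{ap} g^{bq} \Ph_{pqij}$ together with the last identity of Lemma~\ref{SPidentitieslemma} to deduce $D(C) = -3\, D(C)$ (hence $D(C)=0$), whereas you invoke the already-packaged identity of Remark~\ref{liesrmk} for a direct pairwise cancellation, and the paper closes the argument by forward-referencing injectivity of $D$ on $S^2 \oplus \wtws$ (Corollary~\ref{DAinjectivecor}) rather than your more self-contained dimension count $\dim\lies = 21 = \dim\wtwt$.
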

\begin{proof}
Since we are defining \SP\ to be the group preserving $\Ph$, the
kernel of $D$ is isomorphic to $\lies$ by definition. To
show explicitly that this is isomorphic to $\wtwt$, suppose that
$C_{ij}$ is in $\wtwt$. Then $D(C)$ is
\begin{equation*}
\frac{1}{24} \, \left( C^m_i \Ph_{mjkl} + C^m_j \Ph_{imkl} +
C^m_k \Ph_{ijml} + C^m_l \Ph_{ijkm} \right) \dx{i} \wedge
\dx{j} \wedge \dx{k} \wedge \dx{l}
\end{equation*}
From Proposition~\ref{w23prop}, we have $C_{ij} = \frac{1}{2}
\, C_{ab} g^{ap} g^{bq} \Ph_{pqij}$. Using this together with the
final equation of Lemma~\ref{SPidentitieslemma}, one can compute that
\begin{multline*}
C^m_i \Ph_{mjkl} + C^m_j \Ph_{imkl} +
C^m_k \Ph_{ijml} + C^m_l \Ph_{ijkm} = \\ -3 \left( 
C^m_i \Ph_{mjkl} + C^m_j \Ph_{imkl} + C^m_k \Ph_{ijml} + C^m_l \Ph_{ijkm}
\right)
\end{multline*}
and hence $D(C)= 0$. Thus $\wtwt$ is in the kernel of $D$.
We show below that $D$ restricted to $\wtws$ or to $S^2(T)$ is
injective. This completes the proof.
\end{proof}

By counting dimensions, we must have $\wfos = D(\wtws)$
and also $\wfoo \oplus \wfoth = D(S^2)$.
We now proceed to establish these explicitly. The proofs of the next two
propositions are very similar to analogous results in~\cite{K3} and are
left to the reader.

\begin{prop} \label{starDAprop}
Suppose that $A_{ij}$ is a tensor. Consider the $4$-form
$D(A)$ given by
\begin{equation*}
D(A) = A_{ij} g^{jm} \, \dx{i} \wedge \left( \ddx{m} \hk \Ph \right)
\end{equation*}
or equivalently
\begin{equation} \label{DAeq}
D(A)_{ijkl} = A_{im} g^{mn} \Ph_{njkl} + A_{jm} g^{mn} \Ph_{inkl}
+ A_{km} g^{mn} \Ph_{ijnl} + A_{lm} g^{mn} \Ph_{ijkn}
\end{equation}
Then the Hodge star of $D(A)$ is
\begin{equation*}
\st D(A) = D(\bar A) = \bar A_{ij} g^{jm} \, \dx{i} \wedge \left( \ddx{m}
\hk \Ph \right)
\end{equation*}
where $\bar A_{ij} = \frac{1}{4} \tr_g(A) g_{ij} - A_{ji}$. That is,
as a matrix, $\bar A = \frac{1}{4} \tr_g(A) I - A^T$.
\end{prop}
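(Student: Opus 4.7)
The plan is to compute $\st D(A)$ directly by combining the self-duality $\st \Ph = \Ph$ with two standard identities relating the Hodge star to the interior and wedge products, rather than invoking any of the \SP-specific contraction identities from Lemma~\ref{SPidentitieslemma}.

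First, I rewrite $D(A) = A_{ij} g^{jm} \dx{i} \wedge (\ddx{m} \hk \Ph)$ and argue by linearity, so that the task reduces to computing $\st (\dx{i} \wedge (\ddx{m} \hk \Ph))$. For this I use the two identities
\begin{equation*}
\st (\alpha \wedge \omega) = (-1)^k \alpha^{\sharp} \hk \st \omega, \qquad \st (X \hk \omega) = (-1)^{k+1} X^{\flat} \wedge \st \omega,
\end{equation*}
valid for a $1$-form $\alpha$, a vector field $X$, and any $k$-form $\omega$ on our oriented Riemannian $8$-manifold. Applying the second identity with $\omega = \Ph$ and using $\st \Ph = \Ph$ gives $\st (\ddx{m} \hk \Ph) = - g_{mn} \dx{n} \wedge \Ph$. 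Applying the first identity with $\omega = \ddx{m} \hk \Ph$ (a $3$-form) then reduces $\st (\dx{i} \wedge (\ddx{m} \hk \Ph))$ to $g^{ip} g_{mn} \ddx{p} \hk (\dx{n} \wedge \Ph)$, which the Leibniz rule for $\hk$ expands as $\delta^i_m \Ph$ minus $g^{ip} g_{mn} \dx{n} \wedge (\ddx{p} \hk \Ph)$.

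Substituting back into $\st D(A) = A_{ij} g^{jm} \st (\dx{i} \wedge (\ddx{m} \hk \Ph))$, the first piece collapses to $\tr_g(A)\, \Ph$, while the second, after a straightforward relabeling of summation indices, becomes $-A_{ji} g^{jm} \dx{i} \wedge (\ddx{m} \hk \Ph)$. Using the Euler-type identity $\dx{i} \wedge (\ddx{i} \hk \Ph) = 4 \Ph$ to rewrite $\tr_g(A)\, \Ph$ as $\tfrac{1}{4} \tr_g(A)\, g_{ij} g^{jm} \dx{i} \wedge (\ddx{m} \hk \Ph)$, one reads off directly that $\st D(A) = D(\bar A)$ with $\bar A_{ij} = \tfrac{1}{4} \tr_g(A)\, g_{ij} - A_{ji}$.

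The main obstacle is really just keeping the signs in the two Hodge star identities straight, which I would check by specializing to the trivial case $\st (X \hk \vol) = X^{\flat}$ before proceeding. Notably, no \SP-specific identity beyond $\st \Ph = \Ph$ is needed, which makes this argument noticeably cleaner than its \G-analogue in~\cite{K3}.
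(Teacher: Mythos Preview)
Your argument is correct. The paper does not actually supply a proof of this proposition; it simply states that it is ``very similar to analogous results in~\cite{K3} and left to the reader,'' which in that reference means a direct coordinate computation: write out $\st D(A)$ using the Hodge star in components, then simplify the contractions of $\Ph$ with itself via Lemma~\ref{SPidentitieslemma}. Your route is genuinely different and cleaner. By exploiting the two general identities $\st(\alpha \wedge \omega) = (-1)^{|\omega|}\alpha^{\sharp}\hk\st\omega$ and $\st(X\hk\omega) = (-1)^{|\omega|+1}X^{\flat}\wedge\st\omega$ together with $\st\Ph=\Ph$, you bypass all the \SP-specific contraction formulas; the only structural input is self-duality and the Euler relation $\dx{i}\wedge(\ddx{i}\hk\Ph)=4\Ph$. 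This makes the argument dimension- and structure-agnostic in a way the coordinate proof is not, and it explains transparently why the map $A\mapsto\bar A$ is exactly $A\mapsto\tfrac14\tr_g(A)I - A^{T}$: the trace comes from the Euler identity and the transpose from the swap of wedge and interior product. One small caution on your sanity check: in dimension $8$ one has $\st(X\hk\vol) = -X^{\flat}$, not $+X^{\flat}$, consistent with your sign $(-1)^{k+1}$ at $k=8$; this does not affect the body of the proof, where the signs you use are correct.
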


\begin{prop} \label{metricDAprop}
Suppose $A_{ij}$ and $B_{ij}$ are two tensors. Let $D(A)$ and
$D(B)$ be their corresponding forms in $\Omega^4$. We write
$A_{ij} = \frac{1}{8} \tr_g(A) g_{ij} + (A_0)_{ij} + (A_7)_{ij}$, where
$A_0$ is the symmetric traceless component of $A$, and $A_7$ is the
component in $\wtws$. Similarly for $B$. (We can assume they have
no $\wtwt$ component since that is in the kernel of $D$.) Then we have
\begin{equation*}
g_{\Ph} (D(A), D(B)) \, = \frac{7}{2} \tr_g(A)
\tr_g(B) + 4 \tr_g(A_0 B_0) - 16 \tr_g(A_7 B_7)
\end{equation*}
where $A_0 B_0$ and $A_7 B_7$ mean matrix multiplication.
\end{prop}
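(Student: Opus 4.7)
The plan is to reduce the bilinear form $(A,B) \mapsto g_{\Ph}(D(A), D(B))$ to a sum of three diagonal terms by exploiting orthogonality, and then to evaluate each by direct index computation. Decompose $A = \frac{1}{8}\tr_g(A) g + A_0 + A_7$ as in the statement (and similarly for $B$), dropping any $\wtwt$-component since by Proposition~\ref{wtwfdescprop} it lies in $\ker D$. Apply Proposition~\ref{starDAprop}: using $\bar A = \frac{1}{4}\tr_g(A) I - A^T$, one finds $\st D(\frac{1}{8}\tr_g(A)\, g) = D(\frac{1}{8}\tr_g(A)\, g)$, $\st D(A_0) = -D(A_0)$, and $\st D(A_7) = D(A_7)$. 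Combined with the identification $\wfos = D(\wtws)$ noted just before Proposition~\ref{wtwfdescprop}, this places the three images in $\wfoo$, $\wfoth$, and $\wfos$ respectively. Since these are distinct irreducible \SP-subrepresentations of $\Omega^4$, they are mutually orthogonal, so the six cross-terms in the bilinear expansion of $g_{\Ph}(D(A), D(B))$ vanish.

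Next I evaluate the three surviving diagonal pieces. The scalar-scalar contribution is immediate: substituting $A_{ij} = \frac{1}{8}\tr_g(A)\, g_{ij}$ into \eqref{DAeq} collapses the four summands into $D(\frac{1}{8}\tr_g(A)\, g) = \frac{1}{2}\tr_g(A)\, \Ph$, whence this term contributes $\frac{1}{4}\tr_g(A)\tr_g(B)\, |\Ph|^2 = \frac{7}{2}\tr_g(A)\tr_g(B)$, where $|\Ph|^2 = 14$ is the standard normalization of the Cayley form used in this paper (consistent also with $D(g) = 4\Ph$).

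For the remaining two terms I expand $D(A)_{ijkl} D(B)^{ijkl}$ via \eqref{DAeq}, obtaining sixteen terms. Total antisymmetry of $\Ph$ lets one show by dummy-index relabelling that all four terms of the $D(B)$-factor give equal contributions when contracted against $D(A)_{ijkl}$, and likewise for the $D(A)$-factor, so the sum reduces to a single expression of the schematic form $A^{aq} B_a{}^n \cdot \Ph_q{}^{bcd} \Ph_{nbcd}$ plus the three partner terms coming from placing $A$ and $B$ at other slots of $\Ph$. These are evaluated using the Appendix identities, principally the three-index contraction $\Ph_{ajkl}\Ph_b{}^{jkl} = 42\, g_{ab}$ and the two-index identity expressing $\Ph_{ijkl}\Ph_{mn}{}^{kl}$ as a linear combination of $g_{im}g_{jn}-g_{in}g_{jm}$ and $\Ph_{ijmn}$. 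In the symmetric traceless case, the $\Ph_{ijmn}$ piece drops out by the $(i,j)$- and $(m,n)$-symmetry of $A_0, B_0$, the trace pieces vanish by tracelessness, and tracking the combinatorial prefactors delivers $4\tr_g(A_0 B_0)$.

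The $\wtws$ case is the main obstacle: the $\Ph_{ijmn}$ pieces no longer drop out, because $A_7, B_7$ are skew. Here I invoke the additional input from Proposition~\ref{w23prop}, namely $A_7^{pq}\Ph_{pqij} = -6\,A_{7,ij}$, to collapse each $A_7^{pq}\Ph_{pq\cdots}$ sub-expression back to $A_7$ with a factor of $-6$. Combined with the change of sign induced by antisymmetry in the conversion $A^{aq}B_{aq} = -\tr_g(AB)$, this eigenvalue relation is ultimately responsible both for the sign flip and for the enhanced coefficient in $-16\tr_g(A_7 B_7)$. Summing the three diagonal contributions gives the proposition.
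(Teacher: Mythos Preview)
Your argument is correct and is precisely the kind of direct computation the paper intends: the paper gives no proof of this proposition, stating only that it is ``very similar to analogous results in~\cite{K3} and \ldots\ left to the reader,'' and your use of the contraction identities of Lemma~\ref{SPidentitieslemma} together with the eigenvalue relation of Proposition~\ref{w23prop} is exactly that computation. One small point of logical hygiene: you justify the vanishing of the cross-terms by invoking the identification $\wfos = D(\wtws)$ (and implicitly $\wfoo \oplus \wfoth = D(S^2)$), but in the paper's order these identifications are only fully established \emph{after} Corollary~\ref{DAinjectivecor}, which in turn relies on the present proposition. This is not a real obstacle, since the orthogonality you need follows already from what you have computed: $D(A_0)$ is anti-self-dual while $D(\tfrac{1}{8}\tr_g(A)g)$ and $D(A_7)$ are self-dual (your Proposition~\ref{starDAprop} step), and $\langle \Ph, D(A_7)\rangle$ is proportional to $\tr_g(A_7)=0$ by the three-index contraction in Lemma~\ref{SPidentitieslemma}. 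Alternatively one may simply appeal to Schur's lemma for the \SP-equivariant map $D$, as you effectively do.
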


\begin{cor} \label{DAinjectivecor}
The map $D : \mathfrak{gl}(8,\mathbb R) \to \Omega^4$ is injective
on $S^2 \oplus \wtws$. It is therefore an isomorphism onto
its image, $\wfoo \oplus \wfos \oplus \wfoth$.
\end{cor}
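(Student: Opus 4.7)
The plan is to derive injectivity on $S^2 \oplus \wtws$ directly from Proposition~\ref{metricDAprop}, and then identify the image as $\wfoo \oplus \wfos \oplus \wfoth$ via a dimension count together with the self-duality information supplied by Proposition~\ref{starDAprop}.

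For injectivity, decompose $A \in S^2 \oplus \wtws$ orthogonally as $A = \frac{1}{8}\tr_g(A)\,g + A_0 + A_7$, where $A_0$ is symmetric traceless and $A_7 \in \wtws$. With respect to this three-way splitting, Proposition~\ref{metricDAprop} shows that the symmetric bilinear form $(A,B) \mapsto g_{\Ph}(D(A),D(B))$ is block-diagonal with definite (though mixed-signature) coefficients $\frac{7}{2}$, $4$, $-16$ on the three blocks, hence nondegenerate; this means $D$ has trivial kernel on $S^2 \oplus \wtws$. Equivalently, if $D(A) = 0$ then pairing $D(A)$ against each of $D(g)$, $D(A_0)$, $D(A_7)$ in turn collapses the formula of Proposition~\ref{metricDAprop} to a single term proportional to $\tr_g(A)$, $|A_0|_g^2$, and $|A_7|_g^2$ respectively, forcing each to vanish and so $A = 0$. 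Combined with Proposition~\ref{wtwfdescprop}, which gives $\ker D = \wtwt$ inside $\mathfrak{gl}(8,\R)$, this completes the injectivity claim on the complement $S^2 \oplus \wtws$.

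For the image identification, a dimension count gives $\dim D(S^2 \oplus \wtws) = 36 + 7 = 43 = 1 + 7 + 35 = \dim(\wfoo \oplus \wfos \oplus \wfoth)$. To see that this $43$-dimensional image is precisely the named subspace, rather than something involving the $27$-dimensional $\wfot$, I would invoke the identity $\st D(A) = D(\bar A)$ of Proposition~\ref{starDAprop}: on the pure-trace piece $A = cg$, $\bar A = A$, so $D(cg)$ is self-dual and a short computation yields $D(g) = 4\Ph$, spanning $\wfoo$; on traceless symmetric $A_0$, $\bar A = -A_0$, so $D(A_0)$ is anti-self-dual and hence lies in $\wfoth$; on $A_7 \in \wtws$, $A^T = -A$ and $\tr_g(A) = 0$ give $\bar A = A$, so $D(A_7)$ is self-dual, and the matching dimension $7$ (together with \SP-equivariance) identifies this piece of the image with $\wfos$ rather than a subspace of $\wfot$.

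The main obstacle is really the second step: injectivity is essentially immediate from Proposition~\ref{metricDAprop}, but carefully matching each piece of the image with the correct irreducible summand $\wfoo$, $\wfos$, $\wfoth$, and ruling out any overlap with $\wfot$, is what genuinely requires the self-duality information from Proposition~\ref{starDAprop}.
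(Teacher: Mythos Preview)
Your injectivity argument via Proposition~\ref{metricDAprop} is essentially the paper's one-line proof: if $D(A)=0$, apply the inner-product formula to each irreducible piece of $A$ to force it to vanish. (A small remark: the quadratic form $A\mapsto g_\Ph(D(A),D(A))$ is in fact positive definite on $S^2\oplus\wtws$, since $\tr_g(A_7^2)\le 0$ for skew $A_7$; your ``mixed-signature'' description of the coefficients is harmless but slightly obscures this.) Your aside invoking Proposition~\ref{wtwfdescprop} is unnecessary here and mildly circular, since that proposition's proof explicitly defers the injectivity on $S^2\oplus\wtws$ to this corollary.

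Where you differ from the paper is in justifying the image identification. The paper's proof of this corollary does not address the image at all; it relies on the dimension count stated just before Propositions~\ref{starDAprop} and~\ref{metricDAprop} (implicitly using \SP-equivariance and Schur's lemma), and the explicit identifications $D(\R g)=\wfoo$, $D(\wtws)=\wfos$, $D(S^2_0)=\wfoth$ appear only later in Proposition~\ref{Lambdaprop}. Your route via self-duality from Proposition~\ref{starDAprop} is correct and is exactly the content of the final statement of Proposition~\ref{Lambdaprop}; it has the advantage of pinning down $D(S^2_0)=\wfoth$ and $D(\R g)=\wfoo$ without any representation theory, though you still need equivariance (or the eigenvalue computation for $\Lambda_\Ph$) to separate $D(\wtws)$ from $\wfot$ inside $\Omega^4_+$.
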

\begin{proof}
This follows immediately from Proposition~\ref{metricDAprop}, since
if $D(A) = 0$ and $A$ is pure trace, traceless symmetric, or in $\wtws$,
we see that $A = 0$.
\end{proof}

We still need to understand the space $\wfot$. To do this we give
another characterization of the space of $4$-forms using the \SPs,
which may be well-known to experts but has apparently not
appeared in print before.

\begin{defn} \label{Lambdadefn}
We define a \SP-equivariant linear operator $\Lambda_{\Ph}$ on
$\Omega^4$ as follows. Let $\sigma \in \Omega^4$. Use the
notation $(\sigma \cdot \Ph)_{ijkl}$ to denote
$\sigma_{ijmn} g^{mp} g^{nq} \Ph_{pqkl}$. Then
$\Lambda_{\Ph}(\sigma) \in \Omega^4$ is given by
\begin{equation*}
(\Lambda_{\Ph}(\sigma))_{ijkl} = (\sigma \cdot \Ph)_{ijkl} +
(\sigma \cdot \Ph)_{iklj} + (\sigma \cdot \Ph)_{iljk} +
(\sigma \cdot \Ph)_{jkil} + (\sigma \cdot \Ph)_{jlki} +
(\sigma \cdot \Ph)_{klij}
\end{equation*}
\end{defn}
We now explain the motivation for introducing this operator
$\Lambda_{\Ph}$. If $\sigma \in \wfot$, then $g(\sigma, D(A)) = 0$
for all $A \in \mathfrak{gl}(8, \R)$ since $D(A) \in \wfoo \oplus
\wfos \oplus \wfoth$ and the splitting is orthogonal. Writing this
in coordinates using~\eqref{DAeq} gives
\begin{equation*}
\sigma \in \wfot \qquad \Leftrightarrow \qquad \sigma_{abcd} \Ph_{ijkl}
g^{jb} g^{kc} g^{ld} = 0 \quad \text{for all } a, i = 1 , \ldots, 8
\end{equation*}
Taking the above expression and contracting it with $\Ph$, and using
Lemma~\ref{SPidentitieslemma}, after some laborious calculation one can
show that
\begin{equation*}
\sigma \in \wfot \qquad \Leftrightarrow \qquad \sigma_{ijkl} =
\frac{1}{4} (\Lambda_{\Ph}(\sigma))_{ijkl}
\end{equation*}
which says that $\wfot$ is an eigenspace of $\Lambda_{\Ph}$ with
eigenvalue $+4$. Suppose now that $\sigma = D(A) \in \wfoo \oplus
\wfos \oplus \wfoth$. Then another brute force calculation using
Definition~\ref{Lambdadefn} and Lemma~\ref{SPidentitieslemma} shows
\begin{equation*}
\Lambda_{\Ph}(D(A)) = D(6 A^T - 6 A - 3 \tr_g(A) I)
\end{equation*}
and from the above relation it is a simple matter to verify the
following characterization of $\Omega^4$.
\begin{prop} \label{Lambdaprop}
The spaces $\wfoo$, $\wfos$, $\wfot$, and $\wfoth$ are all eigenspaces
of $\Lambda_{\Ph}$ with distinct eigenvalues. Specifically,
\begin{eqnarray*}
& \wfoo = \{ \sigma \in \Omega^4; \Lambda_{\Ph}(\sigma) = -24 \,
\sigma \} \qquad & \wfot = \{ \sigma \in \Omega^4;
\Lambda_{\Ph}(\sigma) = +4 \, \sigma \} \\ & \wfos = \{ \sigma \in
\Omega^4; \Lambda_{\Ph}(\sigma) = -12 \, \sigma \} \qquad &
\wfoth = \{ \sigma \in \Omega^4; \Lambda_{\Ph}(\sigma) = 0 \}
\end{eqnarray*}
In addition, we have
\begin{equation*}
\wfoo = \{ D(\lambda g); \lambda \in \R \} \qquad \wfos =
\{ D(A_7); A_7 \in \wtws \} \qquad \wfoth = \{ D(A_0); A_0 \in S^2_0 \}
\end{equation*}
where $S^2_0$ is the space of symmetric traceless tensors.
Also, Proposition~\ref{starDAprop} shows that
\begin{equation*}
\Omega^4_+ = \{ \sigma \in \Omega^4; \st \sigma = \sigma \} = \wfoo
\oplus \wfos \oplus \wfot \qquad \qquad \Omega^4_- = \{ \sigma \in
\Omega^4; \st \sigma = - \sigma \} = \wfoth
\end{equation*}
is the decomposition into self-dual and anti-self dual $4$-forms.
\end{prop}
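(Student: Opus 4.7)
My plan is to reduce the proposition to two identities that have essentially been stated already in the discussion preceding it: the characterisation $\sigma \in \wfot \Leftrightarrow \Lambda_\Ph(\sigma) = 4\sigma$, and the formula
\[
\Lambda_\Ph(D(A)) = D(6 A^T - 6 A - 3 \tr_g(A) I)
\]
for any $A \in \mathfrak{gl}(8,\R)$. Granting both, what remains is to plug in the three tensor types from the decomposition $A = \tfrac{1}{8}\tr_g(A) g + A_0 + A_7$ (any $\wtwt$-component lies in $\ker D$ and can be discarded), and to identify eigenspaces and check self-duality.

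For the three pieces I would compute case by case. With $A = \lambda g$ one has $A = A^T = \lambda I$ as a matrix and $\tr_g(A) = 8\lambda$, so the right-hand side of the formula equals $D(-24 \lambda g) = -24 \, D(\lambda g)$, giving eigenvalue $-24$ on $D(\{\lambda g : \lambda \in \R\})$. For $A_7 \in \wtws$ skew-symmetric and traceless, $6 A_7^T - 6 A_7 = -12 A_7$, so $\Lambda_\Ph(D(A_7)) = -12 \, D(A_7)$, giving eigenvalue $-12$ on $D(\wtws)$. For $A_0$ symmetric traceless, the right-hand side vanishes, so $\Lambda_\Ph(D(A_0)) = 0$, giving eigenvalue $0$ on $D(S^2_0)$. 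Combined with the $+4$ eigenvalue on $\wfot$, the four eigenvalues are distinct and the corresponding eigenspaces are mutually orthogonal. Corollary~\ref{DAinjectivecor} identifies the first three images of $D$ as $\wfoo$, $\wfos$, and $\wfoth$; a pointwise dimension count $1 + 7 + 27 + 35 = 70 = \dim \Omega^4$ then shows that together with $\wfot$ they exhaust $\Omega^4$.

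For the self-dual/anti-self-dual decomposition I would invoke Proposition~\ref{starDAprop}, which says $\st D(A) = D(\bar A)$ with $\bar A = \tfrac{1}{4}\tr_g(A) I - A^T$. Substituting each type: for $A = \lambda g$, $\bar A = 2\lambda I - \lambda I = A$, so $\wfoo$ is self-dual; for skew $A_7$, $\bar A_7 = -A_7^T = A_7$, so $\wfos$ is self-dual; for symmetric traceless $A_0$, $\bar A_0 = -A_0^T = -A_0$, so $\wfoth$ is anti-self-dual and, being $35$-dimensional, fills all of $\Omega^4_-$. The remaining eigenspace $\wfot$ therefore lies in $\Omega^4_+$.

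The main obstacle is not in this proof but in the two identities on which it rests, which the text itself flags as ``brute force'' and ``laborious''. Both come down to expanding the six-term symmetrisation of Definition~\ref{Lambdadefn} against the four-term formula~\eqref{DAeq} for $D(A)$, and reducing the resulting contractions of two copies of $\Ph$ against the metric via Lemma~\ref{SPidentitieslemma}. The key simplification is that a great many of the intermediate terms land in $\wtwt = \ker D$ and therefore contribute nothing once one reassembles the surviving sum into the form $D(\cdot)$.
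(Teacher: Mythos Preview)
Your proposal is correct and follows exactly the approach the paper intends: it takes as input the two identities displayed just before the proposition (the $\Lambda_\Ph$-eigenvalue $+4$ characterisation of $\wfot$ and the formula $\Lambda_\Ph(D(A)) = D(6A^T - 6A - 3\tr_g(A)I)$), then substitutes the three tensor types to read off the remaining eigenvalues and checks self-duality via Proposition~\ref{starDAprop}, which is precisely what the paper means by ``it is a simple matter to verify.'' One very minor point: the mutual orthogonality of the four pieces is not a consequence of the eigenvalues being distinct (you haven't checked $\Lambda_\Ph$ is self-adjoint) but is already given by the irreducible decomposition of $\Omega^4$ stated at the start of Section~\ref{formssec}; your argument goes through unchanged once you cite that instead.
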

Finally, we have the following result, which is also proved using
Lemma~\ref{SPidentitieslemma}.
\begin{prop} \label{Lambdasquaredprop}
Let $\sigma \in \Omega^4$. Then if we act on $\sigma$ by $\Lambda_{\Ph}$
twice, we have
\begin{eqnarray*}
\Lambda_{\Ph}(\Lambda_{\Ph}(\sigma))_{ijkl} & = &
2 \, \Ph_{ijmn} g^{mp} g^{nq} \sigma_{pqrs} g^{ra} g^{sb} \Ph_{abkl}
+ 2 \, \Ph_{ikmn} g^{mp} g^{nq} \sigma_{pqrs} g^{ra} g^{sb} \Ph_{ablj}
\\ & & {}+ 2 \, \Ph_{ilmn} g^{mp} g^{nq} \sigma_{pqrs} g^{ra} g^{sb}
\Ph_{abjk} + 24 \, \sigma_{ijkl} - 16 \, \Lambda_{\Ph}(\sigma)_{ijkl}
\end{eqnarray*}
\end{prop}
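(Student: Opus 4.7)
The strategy is to apply Definition~\ref{Lambdadefn} twice and then reduce the result with the \SP\ contraction identities of Lemma~\ref{SPidentitieslemma}. Writing $\tau = \Lambda_\Ph(\sigma)$, the expression $\Lambda_\Ph(\tau)_{ijkl}$ is a sum of six permutations of $(\tau\cdot\Ph)_{ijkl} = \tau_{ijmn}g^{mp}g^{nq}\Ph_{pqkl}$, and each entry of $\tau$ is itself a sum of six permutations of $(\sigma\cdot\Ph)$. Substituting produces $36$ terms of the schematic shape
\[
\sigma_{pqrs}\,\Ph_{\cdot\cdot\cdot\cdot}\,\Ph_{\cdot\cdot\cdot\cdot},
\]
each carrying the four external labels $(i,j,k,l)$ together with four contracted dummy indices distributed between the two factors of $\Ph$ according to a handful of combinatorial patterns.

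For each pattern I classify how the two $\Ph$'s share indices with one another. When they share the full pair of dummies introduced by the inner $(\sigma\cdot\Ph)$ operation, the four-index $\Ph\Ph$ contraction from Lemma~\ref{SPidentitieslemma} collapses them to a linear combination of metric tensors and a single $\Ph$; the resulting terms contribute multiples of $\sigma_{ijkl}$ and multiples of permutations of $(\sigma\cdot\Ph)_{ijkl}$, and by the very definition of $\Lambda_\Ph$ these repackage cleanly as a scalar multiple of $\sigma$ plus a scalar multiple of $\Lambda_\Ph(\sigma)$. In the remaining patterns the two $\Ph$'s survive intact, each sharing one index with the external set $\{j,k,l\}$; using the skew symmetries of $\sigma$ in $(p,q)$ and in $(r,s)$ together with its pair-interchange symmetry, these terms organise themselves into the three inequivalent $\Ph\sigma\Ph$ blocks displayed in the proposition, indexed by the choice of partner for $i$ among $\{j,k,l\}$.

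The main obstacle is the bookkeeping: tracking $36$ nested permutations and verifying that the sign and multiplicity counts produce exactly the coefficients $2$, $2$, $2$, $24$, and $-16$ is tedious and error-prone. A useful cross-check comes from Proposition~\ref{Lambdaprop}: on each irreducible component $\wfoo,\wfos,\wfot,\wfoth$ the operator $\Lambda_\Ph$ acts by a scalar $\lambda \in \{-24,-12,4,0\}$, so $\Lambda_\Ph^2\sigma = \lambda^2\sigma$, and the asserted identity reduces on each piece to a relation of the form
\[
2\sum_{\text{cyc}}(\Ph\sigma\Ph)_{ijkl} = (\lambda^2 + 16\lambda - 24)\,\sigma_{ijkl}.
\]
Verifying this on $\wfoo$ with $\sigma = D(g)$ (where all contractions are computable from $\tr_g$ identities) and on $\wfot$ (where the $\Ph\sigma\Ph$ symmetry is most constrained) pins down the coefficients and provides a reliable sanity check against arithmetic slips in the thirty-six-term expansion.
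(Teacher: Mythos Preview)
Your approach is essentially the same as the paper's: the paper offers no proof beyond the remark that the identity ``is also proved using Lemma~\ref{SPidentitieslemma}'', i.e.\ expand $\Lambda_\Ph$ twice via Definition~\ref{Lambdadefn} and reduce the resulting $\Ph\Ph$ contractions with the identities of Lemma~\ref{SPidentitieslemma}. Your write-up simply makes the combinatorial structure of that expansion explicit and adds an eigenvalue cross-check via Proposition~\ref{Lambdaprop}, which is a sensible addition but not a different method.
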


We will need Propositions~\ref{Lambdaprop} and~\ref{Lambdasquaredprop}
in Section~\ref{torsionsec} to study the torsion of a \SPs.

\subsection{The torsion tensor of a \SPs}
\label{torsionsec}

In order to define the {\it torsion tensor\/} $T$ of a \SPs\ $\Ph$,
we need to first study the decomposition of $\nab{X}\Ph$ into
its components in $\Omega^4$.

\begin{lemma} \label{torsionsymmetrieslemma}
For any vector field $X$, the $4$-form $\nab{X} \Ph$ lies in the
subspace $\wfos$ of $\Omega^4$. Hence $\nab{}\Ph$ lies in 
the space $\wone \otimes \wfos$, a $56$-dimensional space (pointwise.)
\end{lemma}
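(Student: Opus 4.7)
The plan is to represent $\nab{X} \Ph$ at each point as $D(A(X))$ for some skew-symmetric tensor $A(X)$, and then invoke the identification $\wfos = D(\wtws)$ from Proposition~\ref{wtwfdescprop} and Corollary~\ref{DAinjectivecor}.

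First, I would work locally near an arbitrary point $p$ in an orthonormal frame $\{e_i\}$ \emph{adapted} to the \SPs, meaning that the components $\Ph_{ijkl}$ of $\Ph$ in $\{e_i\}$ equal the constant standard values of the model $\SP$ 4-form on $\R^8$. Such a frame exists because a \SPs\ is by definition a reduction of the orthonormal frame bundle to the subgroup $\SP \subset \mathrm{SO}(8)$, and local sections of this principal bundle give smooth adapted frames in a neighbourhood of $p$. With $\omega^m_i$ denoting the Levi-Civita connection 1-forms in $\{e_i\}$, expanding $\nab{X} \Ph$ yields
\[
(\nab{X} \Ph)_{ijkl} = X(\Ph_{ijkl}) - \omega^m_i(X) \Ph_{mjkl} - \omega^m_j(X) \Ph_{imkl} - \omega^m_k(X) \Ph_{ijml} - \omega^m_l(X) \Ph_{ijkm},
\]
and the first term vanishes since the $\Ph_{ijkl}$ are constants in this frame.

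Next, I would set $A(X)_{ij} = -g_{jm}\omega^m_i(X)$. Metric compatibility of the Levi-Civita connection forces the connection forms to be skew in an orthonormal frame, so $A(X)$ is skew-symmetric, i.e.\ $A(X) \in \Omega^2$ at $p$. Comparing with~\eqref{DAeq}, the remaining four terms in the previous display are exactly $D(A(X))_{ijkl}$, so $\nab{X}\Ph = D(A(X))$.

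Finally, decomposing $A(X) = A_7(X) + A_{21}(X)$ according to $\Omega^2 = \wtws \oplus \wtwt$, Proposition~\ref{wtwfdescprop} gives $D(A_{21}(X)) = 0$, while Corollary~\ref{DAinjectivecor} together with the dimension count preceding Proposition~\ref{starDAprop} identifies $D(\wtws)$ with $\wfos$. Hence $\nab{X}\Ph = D(A_7(X)) \in \wfos$, as required; the $56$-dimensional statement is then just $\dim(\wone \otimes \wfos) = 8 \cdot 7$. The only step that needs any care (and is not really an obstacle) is the assertion that an adapted orthonormal frame exists and that $\Ph_{ijkl}$ is truly constant in such a frame, which is immediate from the definition of a \SPs\ as a Spin(7)-reduction of the frame bundle.
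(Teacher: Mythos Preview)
Your argument is correct and considerably more conceptual than the paper's. You exploit the existence of a local adapted orthonormal frame to write $\nab{X}\Ph = D(A(X))$ with $A(X)$ skew (coming from the Levi-Civita connection form), and then invoke $\ker D|_{\Omega^2} = \wtwt$ and $D(\wtws) = \wfos$ to conclude. This is essentially the general intrinsic-torsion argument for $G$-structures with $G \subset \mathrm{SO}(n)$: the failure of $\nabla$ to preserve the defining tensor is measured by the $\mathfrak{g}^{\perp}$-part of the connection form, and here $\mathfrak{g}^{\perp} = \wtws$. The paper instead works entirely in arbitrary local coordinates (never passing to an adapted frame): it first uses the differentiated contraction identity in Proposition~\ref{SPderivativeidentitiesprop} to kill the $\wfoo \oplus \wfoth$ components, and then differentiates the fundamental relation~\eqref{SPbasiceq} between $\Ph$ and $g$ to derive the quadratic identity $\Lambda_{\Ph}^2(\nab{X}\Ph) + 8\,\Lambda_{\Ph}(\nab{X}\Ph) - 48\,\nab{X}\Ph = 0$, which via the eigenvalue description in Proposition~\ref{Lambdaprop} forces the $\wfot$ component to vanish as well. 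Your route is shorter and more transparent; the paper's route is more computational but has the virtue of staying within the coordinate framework used throughout and of exercising the $\Lambda_{\Ph}$ machinery developed in Section~\ref{formssec}.
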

\begin{proof}
Let $X = \ddx{m}$, and consider the $4$-form $\nab{m} \Ph$. Then
the second equation of Proposition~\ref{SPderivativeidentitiesprop}
tells us that $\nab{m} \Ph$ is orthogonal to $S^2 \cong \wfoo \oplus
\wfoth$, exactly as in the \G\ case as discussed in~\cite{K3}.
However, we need to work harder to show that there is no
$\wfot$ component.

The essential reason that $\nab{X}\Ph \in \wfos$ is because
of the way that the $4$-form $\Ph$ determines the metric $g$.
From~\cite{K1} (which uses a different orientation convention), we
have
\begin{equation} \label{SPbasiceq}
(u \hk v \hk \Ph) \wedge (w \hk y \hk \Ph) \wedge \Ph = -6 \, g(u
\wedge v, w \wedge y) \vol + 7 \, \Ph(u,v,w,y) \vol
\end{equation}
Taking $\nab{X}$ of this identity gives
\begin{multline*}
(u \hk v \hk \nab{X}\Ph) \wedge (w \hk y \hk \Ph) \wedge \Ph
+ (u \hk v \hk \Ph) \wedge (w \hk y \hk \nab{X}\Ph) \wedge \Ph \\ {}
+ (u \hk v \hk \Ph) \wedge (w \hk y \hk \Ph) \wedge \nab{X}\Ph =
7 \, \nab{X}\Ph(u,v,w,y) \vol
\end{multline*}
Now since $\st \Ph = \Ph$ and $\st  (\nab{X} \Ph) = \nab{X} \! (\st \Ph) =
\nab{X} \Ph$, this can be written as
\begin{multline*}
g( (u \hk v \hk \nab{X}\Ph) \wedge (w \hk y \hk \Ph) , \Ph) 
+ g( (u \hk v \hk \Ph) \wedge (w \hk y \hk \nab{X}\Ph) , \Ph) 
\\ {}+ g(u \hk v \hk \Ph) \wedge (w \hk y \hk \Ph), \nab{X}\Ph) = 
7 \, \nab{X}\Ph(u,v,w,y)
\end{multline*}
We write this expression in coordinates, use Lemma~\ref{SPidentitieslemma}
to simplify the contractions of $\Ph$ with itself, and skew-symmetrize
the result to obtain
\begin{eqnarray*}
(\nab{X}\Ph)_{ijkl} & = & \frac{1}{12} \left( \Ph_{ijmn} g^{mp} g^{nq}
(\nab{X}\Ph)_{pqrs} g^{ra} g^{sb} \Ph_{abkl}
+ \Ph_{ikmn} g^{mp} g^{nq} (\nab{X}\Ph)_{pqrs} g^{ra} g^{sb} \Ph_{ablj}
\right) \\ & & {} + \frac{1}{12} \left( \Ph_{ilmn} g^{mp} g^{nq}
(\nab{X}\Ph)_{pqrs} g^{ra} g^{sb} \Ph_{abjk}\right) -\frac{1}{3} \left( 
(\nab{X}\Ph \cdot \Ph)_{ijkl} +
(\nab{X}\Ph \cdot \Ph)_{iklj} \right) \\ & & -\frac{1}{3} \left(
(\nab{X}\Ph \cdot \Ph)_{iljk} + (\nab{X}\Ph \cdot \Ph)_{jkil} +
(\nab{X}\Ph \cdot \Ph)_{jlki} + (\nab{X}\Ph \cdot \Ph)_{klij} \right)
\end{eqnarray*}
using the notation of Definition~\ref{Lambdadefn}. In fact the above
expression can also be directly verified using the identities of
Lemma~\ref{SPidentitieslemma} and
Proposition~\ref{SPderivativeidentitiesprop}. Now using
Definition~\ref{Lambdadefn} and Proposition~\ref{Lambdasquaredprop} this
becomes
\begin{equation*}
(\nab{X}\Ph)_{ijkl} = -\frac{1}{3} \, \Lambda_{\Ph}(\nab{X}\Ph)_{ijkl}
+ \frac{1}{24} \left( \Lambda_{\Ph}(\Lambda_{\Ph}(\nab{X}\Ph))_{ijkl}
 - 24 \, (\nab{X}\Ph)_{ijkl} + 16 \, \Lambda_{\Ph}(\nab{X}\Ph)_{ijkl}
\right)
\end{equation*}
Upon simplication, we have finally succeeded in showing that the basic
relation~\eqref{SPbasiceq} between the metric and the \SPs\ $\Ph$ implies
\begin{equation} \label{SPtorsionidentityeq}
\Lambda_{\Ph}(\Lambda_{\Ph}(\nab{X}\Ph)) + 8 \, \Lambda_{\Ph}(\nab{X}\Ph)
- 48 \, (\nab{X} \Ph) = 0 \qquad \qquad \text{for any } X
\end{equation}
Let $\nab{X}\Ph = \sigma_1 + \sigma_7 + \sigma_{27} + \sigma_{35}$
be its decomposition into components, where $\sigma_k \in \Omega^4_k$.
Using Proposition~\ref{Lambdaprop}, equation~\eqref{SPtorsionidentityeq}
says
\begin{equation*}
(336)\, \sigma_1 + (0)\, \sigma_7 + (240)\, \sigma_{27} - (48)\, 
\sigma_{35} = 0
\end{equation*}
which, by linear independence, shows $\sigma_1 = \sigma_{27} = \sigma_{35}
= 0$. Therefore $\nab{X}\Ph \in \wfos$.
\end{proof}
\begin{rmk} \label{torsionsymmetriesrmk1}
The above result was first proved in~\cite{F} by Fern\`andez,
using different methods.
\end{rmk}

\begin{defn} \label{torsiondefn}
Lemma~\ref{torsionsymmetrieslemma} says that $\nab{}\Ph$ can be written as
\begin{equation*}
\nab{m} \Ph_{ijkl} = D(T_m)_{ijkl} = T_{m;ip} g^{pq} \Ph_{qjkl} + T_{m;jp}
g^{pq} \Ph_{iqkl} + T_{m;kp} g^{pq} \Ph_{ijql} + T_{m;lp} g^{pq} \Ph_{ijkq}
\end{equation*}
where for each fixed $m$, $T_{m;ab}$ is in $\wtws$. This defines
the {\it torsion tensor\/} $T$ of the \SPs, which
is an element of $\wone \otimes \wtws$.
\end{defn}

The following lemma gives an explicit formula for $T_{m;ab}$ in terms
of $\nab{}\Ph$. This will be used in Section~\ref{torsionevolutionsec} to
derive the evolution equation for the torsion tensor.

\begin{lemma} \label{torsionlemma}
The torsion tensor $T_{m;\alpha\beta}$ is equal to
\begin{equation} \label{torsioneq}
T_{m;\alpha\beta} = \frac{1}{96} \, (\nab{m} \Ph_{\alpha jkl} )
\Ph_{\beta bcd} g^{jb} g^{kc} g^{ld}
\end{equation}
\end{lemma}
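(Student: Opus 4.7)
The plan is to verify the formula directly by substituting the defining equation of $T$ from Definition~\ref{torsiondefn} into the right-hand side of~\eqref{torsioneq}. Writing
\begin{equation*}
\nab{m}\Ph_{\alpha jkl} = T_{m;\alpha p}g^{pq}\Ph_{qjkl} + T_{m;jp}g^{pq}\Ph_{\alpha qkl} + T_{m;kp}g^{pq}\Ph_{\alpha jql} + T_{m;lp}g^{pq}\Ph_{\alpha jkq}
\end{equation*}
and multiplying by $\Ph_{\beta bcd}g^{jb}g^{kc}g^{ld}$ produces four separate contractions, which we handle one at a time.

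The first term yields a three-index $\Ph\!\cdot\!\Ph$ contraction $\Ph_{qjkl}\Ph_\beta{}^{jkl}$, which by the standard Spin(7) identity (Lemma~\ref{SPidentitieslemma}) equals $42\, g_{q\beta}$, contributing $42\, T_{m;\alpha\beta}$. In each of the three remaining terms, the $\Phi$-factor encountered is a two-index $\Ph\!\cdot\!\Ph$ contraction, for example $\Ph_{\alpha qkl}\Ph_\beta{}^{jkl}$. By Lemma~\ref{SPidentitieslemma} each such contraction expands into $g$-type pieces plus a term involving $\Ph$ itself. The $g$-type pieces, after being contracted with $T_m$, produce further multiples of $T_{m;\alpha\beta}$ (noting $T_{m;\beta\alpha}=-T_{m;\alpha\beta}$), while the $\Ph$-type pieces produce residual expressions of the form $T_{m;ab}g^{ap}g^{bq}\Ph_{pq\cdots}$. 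The latter are collapsed by the defining identity of $\wtws$ from Proposition~\ref{w23prop}, namely
\begin{equation*}
T_{m;ab}g^{ap}g^{bq}\Ph_{pqij} = -6\, T_{m;ij}.
\end{equation*}
Summing the four contributions then must yield $96\, T_{m;\alpha\beta}$, establishing the formula after dividing by $96$.

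The main obstacle is the careful bookkeeping of the three two-index $\Ph$-$\Ph$ contractions and their subsequent $\wtws$-reductions: each expands into several terms by Lemma~\ref{SPidentitieslemma}, and every residual $T \cdot \Ph$ contraction must be reprocessed through the $\wtws$ identity, with many signs and indices to track. A conceptual shortcut avoids most of this: by Corollary~\ref{DAinjectivecor} the map $T_m \mapsto D(T_m)$ is a Spin(7)-equivariant isomorphism from $\wtws$ onto $\wfos$, so any Spin(7)-equivariant linear map $\wfos \to \wtws$ is a scalar multiple of the inverse. Since the contraction on the right-hand side of~\eqref{torsioneq} is manifestly Spin(7)-equivariant and lands in $\wtws$ in the $(\alpha,\beta)$ indices (easily verified from Proposition~\ref{w23prop}), the constant $1/96$ can be pinned down by testing the identity on a single nonzero example.
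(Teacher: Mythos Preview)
Your approach is correct and is exactly what the paper intends: its proof reads in full ``This is a simple computation using Definition~\ref{torsiondefn} and the identities in Lemma~\ref{SPidentitieslemma},'' and your substitution of $\nab{m}\Ph$ via $T_m$, followed by the three- and two-index $\Ph$-contractions together with the $\wtws$ eigenvalue relation, carries this out (the four terms contribute $42+18+18+18=96$). The Schur-lemma shortcut you add at the end is a pleasant extra not present in the paper, though note that the antisymmetry in $(\alpha,\beta)$ of the right-hand side is most cleanly seen from the second identity of Proposition~\ref{SPderivativeidentitiesprop} rather than from Proposition~\ref{w23prop}.
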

\begin{proof}
This is a simple computation using Definition~\ref{torsiondefn} and
the identities in Lemma~\ref{SPidentitieslemma}.
\end{proof}

We close this section with some remarks about the decomposition of
$T$ into irreducible components. One can show that $\wone \otimes
\wtws \cong \Omega^3 = \wthe \oplus \wthf$. Therefore
the torsion tensor $T$ is actually a $3$-form, with two irreducible
components. In fact under this isomorphism $T$ is essentially $\dl \Ph$,
which is the content of the following result.

\begin{thm}[Fern\'andez, 1986] \label{Fthm}
The \SPs\ corresponding to $\Ph$ is torsion-free if and only if 
$d \Ph = 0$. Since $\st \Ph = \Ph$, this is equivalent to $\dl \Ph = 0$.
\end{thm}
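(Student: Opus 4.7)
The plan is to exploit the equal dimensions $\dim(\wone \otimes \wtws) = 8 \cdot 7 = 56 = \dim \Omega^3$ together with $\SP$-equivariance to show that the torsion $T$ of Definition \ref{torsiondefn} and the $3$-form $\dl \Ph$ carry equivalent information. The equivalence $d \Ph = 0 \Leftrightarrow \dl \Ph = 0$ is automatic from self-duality: on a $4$-form in dimension $8$ we have $\dl = -\st d \st$, and $\st \Ph = \Ph$ combined with invertibility of $\st$ gives $\dl \Ph = -\st d \Ph$. So everything hinges on proving that $\nab{}\Ph = 0$ if and only if $\dl \Ph = 0$.

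The forward direction is immediate: $d$ is (up to signs) the total skew-symmetrization of $\nab{}$, and $\dl$ is built from $\nab{}$ by divergence, so both vanish when $\nab{}\Ph = 0$. For the converse, I would compute $\dl \Ph$ in coordinates directly from the torsion. Substituting Definition \ref{torsiondefn} into
\[
(\dl \Ph)_{jkl} \; = \; -g^{mi}\, \nab{m}\Ph_{ijkl} \; = \; -g^{mi}\, D(T_m)_{ijkl},
\]
then applying the $\wtws$-characterization $T_{m;ab}g^{ap}g^{bq}\Ph_{pqij} = -6 \, T_{m;ij}$ from Proposition \ref{w23prop} to each of the four terms of $D(T_m)$, and simplifying the resulting $\Ph$-$\Ph$ contractions using Lemma \ref{SPidentitieslemma}, one obtains an explicit $\SP$-equivariant linear expression $\Psi(T) \in \Omega^3$.

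It then suffices to prove that $\Psi : \wone \otimes \wtws \to \Omega^3$ is injective, whereupon the equality of dimensions makes it an isomorphism and the chain of implications $\dl \Ph = 0 \Rightarrow T = 0 \Rightarrow \nab{}\Ph = 0$ follows. Granting the paper's remark that $\wone \otimes \wtws \cong \wthe \oplus \wthf$ as $\SP$-modules, both source and target of $\Psi$ decompose into the same pair of \emph{inequivalent} irreducibles (of distinct dimensions $8$ and $48$). Schur's lemma then forces $\Psi$ to act as a scalar on each summand, so injectivity reduces to verifying that neither scalar vanishes. This is a single sample computation on each summand: one plugs in a $T$ of pure type $\wthe$ (for example a $T$ whose $3$-form image is $X \hk \Ph$ for a nonzero vector field $X$) and one of pure type $\wthf$ into the formula for $\Psi$, and checks that each output is nonzero.

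The main obstacle is the brute-force contraction calculation producing the closed form of $\Psi(T)$; everything after that reduces to a pair of short sample computations and the representation-theoretic observation above. This matches the informal remark preceding the theorem that, under the isomorphism $\wone \otimes \wtws \cong \Omega^3$, the torsion $T$ is, up to a nonzero universal constant, simply $\dl \Ph$.
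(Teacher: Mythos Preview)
The paper does not actually prove this theorem: it is stated as a classical result of Fern\'andez and left without proof, preceded only by the informal remark that under the isomorphism $\wone \otimes \wtws \cong \Omega^3$ the torsion $T$ ``is essentially $\dl\Ph$''. So there is no argument in the paper to compare yours against.

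That said, your outline is correct and is exactly what the paper's remark is gesturing at. The self-duality reduction $d\Ph = 0 \Leftrightarrow \dl\Ph = 0$ is right, the forward implication is trivial, and for the converse your map $\Psi$ is the pointwise algebraic, \SP-equivariant map $T_{m;ab} \mapsto -g^{mi} D(T_m)_{ijkl}$, so Schur's lemma applies once you grant the decomposition $\wone \otimes \wtws \cong \wthe \oplus \wthf$ into inequivalent irreducibles. The only step you leave undone is verifying that neither Schur scalar is zero. One clean way to do this, rather than guessing a $T$ of pure type (your parenthetical ``a $T$ whose $3$-form image is $X \hk \Ph$'' is slightly circular, since it presupposes the isomorphism), is to carry the contraction through using Lemma~\ref{SPidentitieslemma}: expanding $-g^{mi}D(T_m)_{ijkl}$ and using $T_{m;ab}g^{ap}g^{bq}\Ph_{pqij} = -6\,T_{m;ij}$ yields an explicit linear combination of $g^{mi}T_{m;i\,\cdot}$-type and $T_{\cdot;\cdot\cdot}$-type terms whose $\wthe$ and $\wthf$ projections (via Proposition~\ref{w23prop}) have visibly nonzero coefficients.
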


Suppose $M$ is simply-connected, as it must be to admit a metric with
holonomy exactly equal to \SP\ in the compact case (see~\cite{J4}.)
Then as in the \G\ case, which is described in~\cite{K3}, the
component of the torsion in $\wthe$ can always be conformally scaled
away, once we have made the $\wthf$ component vanish, without
changing that other component. Therefore in principle we can restrict
our attention to trying to make the $\wthf$ component of the torsion
vanish, although it is not clear if this is really a
simplification. We will not pursue this here.

\section{General flows of \SPs s} \label{generalflowssec}

In this section we derive the evolution equations for a
general flow $\ddt \Ph$ of a \SPs\ $\Ph$. Let $A_{ij} = h_{ij} + X_{ij}$,
where $h_{ij} \in S^2$ and $X_{ij} \in \wtws$. Then from the discussion
in Section~\ref{formssec}, a general variation of $\Ph$ can be
written as $\ddt \Ph = D(A)$. In coordinates, using~\eqref{DAeq}, this is
\begin{equation} \label{generalfloweq}
\boxed{\, \, \ddt \Ph_{ijkl} \, = \,  A_{im} g^{mn} \Ph_{njkl} +
A_{jm} g^{mn} \Ph_{inkl} + A_{km} g^{mn} \Ph_{ijnl} + A_{lm} g^{mn}
\Ph_{ijkn} \, \, }
\end{equation}

The first thing we need to do is to derive the evolution equations for the
metric $g$ and objects related to the metric, specifically the volume form
$\vol$ and the Christoffel symbols $\Gamma^k_{ij}$. We do this using a
much simpler argument than that presented in~\cite{K3} for the \G\ case.
This method works for that case as well.

\begin{prop} \label{metricevolutionprop}
The evolution of the metric $g_{ij}$ under the
flow~\eqref{generalfloweq} is given by
\begin{equation} \label{metricevolutioneq}
\boxed{\, \, \ddt g_{ij} = 2 \, h_{ij} \, \, }
\end{equation}
\end{prop}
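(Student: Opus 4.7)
The plan is to exploit the naturality of the assignment $\Ph \mapsto g_\Ph$. Since formula~\eqref{gijeq} determines $g_\Ph$ pointwise from $\Ph$ by tensorial operations with no additional input, the construction is equivariant under the action of $\mathrm{GL}(8,\mathbb{R})$: if $B \in \mathrm{GL}(8,\mathbb{R})$ acts on $\Ph$ as in Section~\ref{formssec}, then $g_{B \cdot \Ph} = B \cdot g_\Ph$, the corresponding action on symmetric $2$-tensors. This observation avoids any direct manipulation of the nonlinear formula~\eqref{gijeq}, which is what makes the argument shorter than the $\G$ analogue in~\cite{K3}.

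First I would introduce the auxiliary one-parameter family $\Ph_s = e^{As} \cdot \Ph$, where $A_{ij} = h_{ij} + X_{ij}$ and the action is the one used to define $D$. The computation leading to the formula for $D$ in Section~\ref{formssec} shows that $\frac{d}{ds} \Ph_s \big|_{s=0} = D(A)$, which is exactly the right-hand side of~\eqref{generalfloweq}. Since the assignment $\Ph \mapsto g_\Ph$ depends smoothly (in fact pointwise) on $\Ph$, the derivative $\ddt g_{ij}$ along our flow depends only on the first-order variation of $\Ph$ at $t=0$, and hence equals $\frac{d}{ds}(g_{\Ph_s})_{ij}\big|_{s=0}$.

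Next I would invoke equivariance to write $g_{\Ph_s} = (g_\Ph)_{ij}\,(e^{As}\dx{i}) \otimes (e^{As}\dx{j})$ and differentiate at $s=0$. The product rule yields two terms, each of which contracts one factor of $A^i_m$ against $g_{ij}$. Using the paper's convention $A^m_i = g^{mj} A_{ij}$, which gives $A^i_p g_{iq} = A_{pq}$, these combine into
\begin{equation*}
\frac{d}{ds} (g_{\Ph_s})_{pq} \Big|_{s=0} \,=\, A_{pq} + A_{qp}.
\end{equation*}
Decomposing $A = h + X$ into its symmetric and skew-symmetric parts yields $A_{pq} + A_{qp} = 2 h_{pq}$, which is~\eqref{metricevolutioneq}.

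There is no real obstacle in this approach; the only sanity check is that the answer is independent of the choice of $A$ representing the variation $D(A)$. Adding an element of $\ker D = \wtwt$ to $A$ (which is skew-symmetric) leaves both $D(A)$ and the symmetric combination $A + A^T$ unchanged, so the conclusion $\ddt g_{ij} = 2 h_{ij}$ is well-defined.
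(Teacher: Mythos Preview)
Your proposal is correct and is essentially the same argument as the paper's: both choose the auxiliary path $\Ph_s = e^{As}\cdot\Ph$, use the equivariance of $\Ph \mapsto g_\Ph$ to write $g_{\Ph_s} = g_{ij}\,(e^{As}\dx{i})(e^{As}\dx{j})$, and differentiate at $s=0$ to obtain $A_{ij}+A_{ji}=2h_{ij}$. Your added remark on independence of the representative $A$ modulo $\ker D = \wtwt$ is a harmless extra check (in the paper $A$ is already taken in $S^2 \oplus \wtws$, so the issue does not arise).
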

\begin{proof}
We want to know what the first order variation of the metric $g_{\Ph}$ is,
given a first order variation $D(A)$ of the \SPs\ $\Ph$. It suffices to
consider any path $\Ph(t)$ of \SPs s that satisfies
$\left. \ddt \right|_{t=0} \Ph(t) = D(A)$.
We take
\begin{equation*}
\Ph(t) = e^{A t} \cdot \Ph = \frac{1}{24} \, \Ph_{ijkl} \, (e^{At} \dx{i})
\wedge (e^{At} \dx{j}) \wedge (e^{At} \dx{k}) \wedge (e^{At} \dx{l}) 
\end{equation*}
Then if $g = g_{ij} \dx{i} \dx{j}$ is the metric of $\Ph =
\Ph(0)$, it is easy to see that the metric $g(t)$ of $\Ph(t)$ is
\begin{equation*}
g(t) = g_{ij} (e^{At} \dx{i}) (e^{At} \dx{j})
\end{equation*}
Now we differentiate
\begin{eqnarray*}
\left. \ddt \right|_{t=0} g(t) & = & g_{ij} \, (A^i_k \dx{k}) \dx{j} +
g_{ij} \, \dx{i} (A^j_l \dx{l}) = A_{kj} \, \dx{k} \dx{j} + A_{li}
\, \dx{i} \dx{l} \\ & = & (A_{ij} + A_{ji}) \, \dx{i} \dx{j} =
2 \, h_{ij} \, \dx{i} \dx{j}
\end{eqnarray*}
since $h$ is the symmetric part of $A$. This completes the proof.
\end{proof}

\begin{cor} \label{metricevolutioncor}
The evolution of the inverse $g^{ij}$ of the metric, the volume form
$\vol$, and the Christoffel symbols $\Gamma^k_{ij}$, under the
flow~\eqref{generalfloweq}, are given by
\begin{equation*}
\ddt g^{ij} = -2 \, h^{ij} \qquad \qquad \ddt \vol = \tr_g(h)\, \vol 
\qquad \qquad \ddt \Gamma^k_{ij} = g^{kl} \left( \nab{i} h_{jl} +
\nab{j} h_{il} - \nab{l} h_{ij} \right)
\end{equation*}
\end{cor}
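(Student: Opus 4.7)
The plan is to derive each of the three evolution equations as a direct consequence of Proposition~\ref{metricevolutionprop}, using only standard differential-geometric identities and the fact that both sides of every claimed equation are tensorial.

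First I would handle $g^{ij}$ by differentiating the defining identity $g^{ij} g_{jk} = \delta^i_k$ with respect to $t$. Since $\delta^i_k$ is constant in $t$, this gives $\left(\ddt g^{ij}\right) g_{jk} = - g^{ij} \left(\ddt g_{jk}\right) = -2 \, g^{ij} h_{jk}$, and contracting with $g^{kl}$ yields $\ddt g^{ij} = -2 h^{ij}$. Next, for the volume form, write $\vol = \sqrt{\det g}\; \dx{1} \wedge \cdots \wedge \dx{8}$ and apply Jacobi's formula $\ddt \det g = (\det g)\, g^{ij} \ddt g_{ij}$. Combined with \eqref{metricevolutioneq}, this gives
\begin{equation*}
\ddt \vol \; = \; \frac{1}{2\sqrt{\det g}} \ddt (\det g) \; \dx{1} \wedge \cdots \wedge \dx{8} \; = \; \tfrac{1}{2} g^{ij} (2 h_{ij}) \, \vol \; = \; \tr_g(h) \, \vol.
\end{equation*}

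For the Christoffel symbols, the key point is that although $\Gamma^k_{ij}$ itself is not a tensor, the variation $\ddt \Gamma^k_{ij}$ \emph{is} a tensor (as the difference of two connections). This means it suffices to verify the formula at an arbitrary point $p$ in any convenient chart. I would choose normal coordinates at $p$ for the metric $g(0)$, so that at $p$ we have $\Gamma^k_{ij} = 0$ and $\partial_m g_{ij} = 0$, and consequently $\nab{i} h_{jl} = \partial_i h_{jl}$ at $p$. Then differentiating the usual expression
\begin{equation*}
\Gamma^k_{ij} \; = \; \tfrac{1}{2} g^{kl} \bigl( \partial_i g_{jl} + \partial_j g_{il} - \partial_l g_{ij} \bigr)
\end{equation*}
with respect to $t$ at $p$, the terms where $\ddt$ falls on $g^{kl}$ vanish (because each $\partial g$ factor vanishes), leaving
\begin{equation*}
\ddt \Gamma^k_{ij} \; = \; \tfrac{1}{2} g^{kl} \bigl( \partial_i \ddt g_{jl} + \partial_j \ddt g_{il} - \partial_l \ddt g_{ij} \bigr) \; = \; g^{kl} \bigl( \nab{i} h_{jl} + \nab{j} h_{il} - \nab{l} h_{ij} \bigr)
\end{equation*}
at $p$. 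Since $p$ was arbitrary and both sides are tensors, the identity holds globally.

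None of these steps involves the \SPs\ structure $\Ph$ directly; they are purely Riemannian consequences of~\eqref{metricevolutioneq}. The mildest subtlety is the tensorial character of $\ddt \Gamma^k_{ij}$, which justifies the use of normal coordinates, but this is standard and presents no real obstacle.
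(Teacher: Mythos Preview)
Your proof is correct and is precisely the standard Riemannian argument the paper alludes to; the paper's own proof consists only of the sentence ``This is a standard result.'' Your detailed verification of each of the three formulas---differentiating $g^{ij}g_{jk}=\delta^i_k$, applying Jacobi's formula to $\sqrt{\det g}$, and using the tensoriality of $\ddt\Gamma^k_{ij}$ together with normal coordinates---is exactly what one would write to justify that sentence.
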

\begin{proof}
This is a standard result.
\end{proof}

\subsection{Evolution of the torsion tensor}
\label{torsionevolutionsec}

In this section we derive the evolution equation for the torsion
tensor $T$ of \Ph\ under the general flow~\eqref{generalfloweq}.
We begin with the evolution of $\nab{m} \Ph_{ijkl}$.

\begin{lemma} \label{nabPhevolutionlemma}
The evolution of $\nab{m} \Ph_{ijkl}$ under the
flow~\eqref{generalfloweq} is given by
\begin{eqnarray*}
\ddt \left( \nab{m} \Ph_{ijkl} \right) & = & A_{ip} g^{pq}
(\nab{m} \Ph_{qjkl}) + A_{jp} g^{pq} (\nab{m} \Ph_{iqkl}) + A_{kp} g^{pq}
(\nab{m} \Ph_{ijql}) + A_{lp} g^{pq} (\nab{m} \Ph_{ijkq}) \\
& & {}+ (\nab{p} h_{im})g^{pq} \Ph_{qjkl} + (\nab{p} h_{jm})g^{pq}
\Ph_{iqkl} + (\nab{p} h_{km})g^{pq} \Ph_{ijql} + (\nab{p}h_{lm})g^{pq}
\Ph_{ijkq} \\ & & {}- (\nab{i} h_{pm})g^{pq} \Ph_{qjkl} -
(\nab{j} h_{pm})g^{pq} \Ph_{iqkl} - (\nab{k} h_{pm})g^{pq} \Ph_{ijql} -
(\nab{l}h_{pm})g^{pq} \Ph_{ijkq} \\ & & {}+ (\nab{m} X_{ip})g^{pq}
\Ph_{qjkl} + (\nab{m} X_{jp})g^{pq} \Ph_{iqkl} + (\nab{m} X_{kp})g^{pq}
\Ph_{ijql} + (\nab{m}X_{lp})g^{pq} \Ph_{ijkq} 
\end{eqnarray*}
where $A_{ij} = h_{ij} + X_{ij} \in S^2 \oplus \wtws$.
\end{lemma}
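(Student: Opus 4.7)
The plan is to use the standard commutation identity
\begin{equation*}
\ddt(\nab{m}\Ph_{ijkl}) = \nab{m}(\ddt\Ph_{ijkl}) - (\ddt \Gamma^p_{mi})\Ph_{pjkl} - (\ddt\Gamma^p_{mj})\Ph_{ipkl} - (\ddt\Gamma^p_{mk})\Ph_{ijpl} - (\ddt\Gamma^p_{ml})\Ph_{ijkp},
\end{equation*}
which follows from writing $\nab{m}\Ph_{ijkl} = \del_m \Ph_{ijkl} - \sum_\alpha \Gamma^p_{m\alpha}\Ph_{\cdots p \cdots}$ and differentiating in $t$, noting that the partial derivatives $\del_m$ commute with $\ddt$.

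For the first term, I would apply $\nab{m}$ to the flow equation~\eqref{generalfloweq}. Using metric-compatibility $\nab{m}g^{pq}=0$ and the Leibniz rule, this produces two groups of four terms: four of the form $A_{ip}g^{pq}(\nab{m}\Ph_{qjkl})$ (and its cousins obtained by cycling through $j,k,l$), which give the first line of the claimed formula, and four of the form $(\nab{m}A_{ip})g^{pq}\Ph_{qjkl}$. Splitting $A = h + X$ in the latter, the $\nab{m}X$ part is precisely the fourth line of the claim, while the $\nab{m}h$ part is destined to cancel against a contribution from $\ddt\Gamma$.

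For the Christoffel contributions, I would substitute
\begin{equation*}
\ddt\Gamma^p_{m\alpha} = g^{pq}\bigl(\nab{m}h_{\alpha q} + \nab{\alpha}h_{mq} - \nab{q}h_{m\alpha}\bigr)
\end{equation*}
from Corollary~\ref{metricevolutioncor} into each of the four subtracted terms above. The $\nab{m}h$ piece, after relabeling $p\leftrightarrow q$ and using the symmetry of $g^{pq}$ and $h_{ab}$, exactly cancels the $\nab{m}h$ contribution from the previous step. The $\nab{\alpha}h_{mq}$ piece (with $\alpha \in \{i,j,k,l\}$) similarly rewrites as $-(\nab{\alpha}h_{pm})g^{pq}\Ph_{q\cdots}$, yielding the third line of the claim, and the $-\nab{q}h_{m\alpha}$ piece becomes $+(\nab{p}h_{\alpha m})g^{pq}\Ph_{q\cdots}$, yielding the second line.

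The only obstacle here is bookkeeping: each of the indices $i,j,k,l$ contributes through the Christoffel term, giving twelve summands to keep track of, and the cancellation of $\nab{m}h$ terms and the reduction of the surviving Christoffel contributions to the standard form $(\nab{\cdot}h_{\cdot m})g^{pq}\Ph_{q\cdots\cdots\cdots}$ rely crucially on the symmetries $h_{ab}=h_{ba}$ and $g^{pq}=g^{qp}$. No identities from the Appendix are needed; metric-compatibility together with Corollary~\ref{metricevolutioncor} suffices.
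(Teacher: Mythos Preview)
Your argument is correct and follows exactly the paper's approach: write $\nab{m}\Ph_{ijkl}$ in terms of partial derivatives and Christoffel symbols, differentiate in $t$, substitute the flow equation and the evolution of $\Gamma$ from Corollary~\ref{metricevolutioncor}, and observe that the $\nab{m}h$ terms cancel in pairs. The paper's proof is terser but identical in substance; your explicit bookkeeping of the three pieces of $\ddt\Gamma^p_{m\alpha}$ and the relabeling that produces lines two and three is a faithful expansion of what the paper leaves to the reader.
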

\begin{proof}
Recall that
\begin{equation*}
\nab{m} \Ph_{ijkl} = \ddx{m} \Ph_{ijkl} - \Gamma^n_{mi} \Ph_{njkl}
- \Gamma^n_{mj} \Ph_{inkl} - \Gamma^n_{mk} \Ph_{ijnl} - \Gamma^n_{ml}
\Ph_{ijkn}
\end{equation*}
so if we differentiate this equation with respect to $t$ and simplify,
we obtain
\begin{eqnarray*}
\ddt \left( \nab{m} \Ph_{ijkl} \right) & = & \nab{m} \left( \ddt \Ph_{ijkl}
\right)  - \left(\ddt \Gamma^n_{mi}\right) \Ph_{njkl} \\ & & {}- \left( \ddt
\Gamma^n_{mj} \right) \Ph_{inkl} - \left( \ddt \Gamma^n_{mk} \right)
\Ph_{ijnl} - \left( \ddt \Gamma^n_{ml} \right) \Ph_{ijkn}
\end{eqnarray*}
Now we substitute~\eqref{generalfloweq} and use
Corollary~\ref{metricevolutioncor}. After we use the product rule on the
first term, all the terms involving $\nab{m} h$ cancel in pairs. The
result now follows.
\end{proof}

\begin{thm} \label{torsionevolutionthm}
The evolution of the torsion tensor $T_{m;\alpha\beta}$ under the
flow~\eqref{generalfloweq} is given by
\begin{equation} \label{torsionevolutioneq}
\boxed{\, \, \ddt T_{m;\alpha\beta} = A_{\alpha p} g^{pq} T_{m;q\beta}
- A_{\beta p} g^{pq} T_{m;q\alpha} + \pi_7( \nab{\beta} h_{\alpha m}
- \nab{\alpha} h_{\beta m} + \nab{m} X_{\alpha\beta} ) \, \, }
\end{equation}
where $A_{ij} = h_{ij} + X_{ij}$ is the element of $S^2 \oplus \wtws$
corresponding to the flow of $\Ph$, and $\pi_7$ denotes the projection
onto $\wtws$ of the tensor skew-symmetric in $\alpha, \beta$ for
fixed $m$.
\end{thm}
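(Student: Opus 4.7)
The plan is to differentiate the explicit formula $T_{m;\alpha\beta}=\tfrac{1}{96}(\nab{m}\Ph_{\alpha jkl})\Ph_{\beta bcd}g^{jb}g^{kc}g^{ld}$ of Lemma~\ref{torsionlemma} with respect to $t$, substitute the evolution equations from Lemma~\ref{nabPhevolutionlemma}, the flow equation~\eqref{generalfloweq}, and Corollary~\ref{metricevolutioncor}, and simplify using the contraction identities of Lemma~\ref{SPidentitieslemma}. Applying the product rule to the five $t$-dependent factors produces a large sum, which I would organise into three groups according to the role of the flow parameter $A=h+X$: terms in which $A$ acts on the free indices $\alpha,\beta$; terms in which $A$ acts on the contracted internal indices; and terms arising from $\ddt\Gamma^n_{mi}$ and from $\nab{m}X$.

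The first group is the easiest. The contribution $A_{\alpha p}g^{pq}(\nab{m}\Ph_{qjkl})$ from Lemma~\ref{nabPhevolutionlemma}, contracted with $\tfrac{1}{96}\Ph_{\beta bcd}g^{jb}g^{kc}g^{ld}$, is immediately $A_{\alpha p}g^{pq}T_{m;q\beta}$ by~\eqref{torsioneq}; the symmetrically placed $A_{\beta p}g^{pn}\Ph_{nbcd}$ from $\ddt\Ph_{\beta bcd}$ in~\eqref{generalfloweq} similarly yields $-A_{\beta p}g^{pq}T_{m;q\alpha}$, producing the first two terms of~\eqref{torsionevolutioneq}. The second group --- the six ``internal'' $A$-contractions, together with the three metric variations $-2h^{jb}$, etc.\ from Corollary~\ref{metricevolutioncor} --- cancels entirely: for each contracted pair of indices, the two internal $A$-contributions combine (after relabelling) into $(A^{jb}+A^{bj})(\nab{m}\Ph_{\alpha jkl})\Ph_{\beta bcd}g^{kc}g^{ld}$, whose symmetric part $2h^{jb}(\ldots)$ is annihilated by the corresponding metric variation, and whose skew part vanishes by antisymmetry. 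This cancellation is the infinitesimal manifestation of the \SP-equivariance of the scalar contraction $(\nab{m}\Ph_{\alpha jkl})\Ph_\beta{}^{jkl}$.

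The third group collects the $\nab h$ and $\nab X$ terms arising from $\ddt\Gamma^n_{mi}$ and from the chain rule inside Lemma~\ref{nabPhevolutionlemma}. Each such term has the form $(\nab{p}Y_{im})g^{pq}\Ph_{qjkl}$ or a permutation thereof, with $Y$ equal to $\pm h$ or $X$. On substituting $i=\alpha$ and contracting with $\tfrac{1}{96}\Ph_{\beta bcd}g^{jb}g^{kc}g^{ld}$, the ``free-index'' contribution reduces via the identity $\Ph_{qjkl}\Ph_\beta{}^{jkl}\propto g_{q\beta}$ from Lemma~\ref{SPidentitieslemma} to a multiple of $\nab{\beta}Y_{\alpha m}$, while the three ``internal-index'' contributions reduce by further self-contractions of $\Ph$ to a multiple of $Y_{ab\, m}g^{ap}g^{bq}\Ph_{pq\alpha\beta}$. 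Collecting these with their numerical coefficients reproduces exactly the combination $\tfrac{1}{4}Y_{\alpha\beta m}-\tfrac{1}{8}Y_{ab\, m}g^{ap}g^{bq}\Ph_{pq\alpha\beta}$, which is the $\pi_7$-projection formula~\eqref{wtwsprojeq} applied in the $\alpha,\beta$ indices. The sign patterns of lines two through four of Lemma~\ref{nabPhevolutionlemma} then assemble the $h$ and $X$ contributions into $\pi_7(\nab{\beta}h_{\alpha m}-\nab{\alpha}h_{\beta m}+\nab{m}X_{\alpha\beta})$. The main obstacle is the bookkeeping in this third group: several dozen triple-$\Ph$ contractions must be evaluated and the numerical coefficients of the $\pi_7$-projection verified from Lemma~\ref{SPidentitieslemma}. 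A useful consistency check throughout is that $T_{m;\alpha\beta}$ must lie in $\wtws$ in its $\alpha,\beta$ indices as long as $\Ph(t)$ is a \SPs, which forces the appearance of $\pi_7$ on the right-hand side; moreover, much of the algebra parallels the analogous \G\ calculation in~\cite{K3}, which I would exploit to streamline the verification.
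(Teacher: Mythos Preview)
Your approach is correct and is essentially the same as the paper's: differentiate the formula of Lemma~\ref{torsionlemma}, substitute the evolution equations for $\nab{m}\Ph$, $\Ph$, and $g^{ij}$, and simplify via the contraction identities of Lemma~\ref{SPidentitieslemma}. The paper organises the bookkeeping slightly differently --- it packages the derivative terms as $D(B)$ with $B_{\alpha\beta}=\nab{\beta}h_{\alpha m}-\nab{\alpha}h_{\beta m}+\nab{m}X_{\alpha\beta}$ and evaluates four block contractions using an auxiliary identity for $D(A)_{ijkl}\Ph_{abcd}g^{kc}g^{ld}$, then collects --- whereas your free/internal/derivative split, together with the equivariance observation that kills the internal terms pairwise, is a somewhat cleaner route to the same cancellations.
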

\begin{proof}
This is a long computation, but is similar in spirit to the analogous
result for \Gs s in~\cite{K3}. We will describe the main steps, and
leave the details to the reader. Begin with Lemma~\ref{torsionlemma},
and differentiate to obtain:
\begin{eqnarray} \nonumber
\ddt T_{m;\alpha\beta} & = & \frac{1}{96} \left( \ddt \nab{m}
\Ph_{\alpha jkl} \right) \Ph_{\beta bcd} g^{jb} g^{kc} g^{ld} +
\frac{1}{96} (\nab{m} \Ph_{\alpha jkl}) \left( \ddt
\Ph_{\beta bcd} \right) g^{jb} g^{kc} g^{ld} \\
\label{torsionevolutiontempeq} & & {} - \frac{6}{96} (\nab{m}
\Ph_{\alpha jkl}) \Ph_{\beta bcd} h^{jb} g^{kc} g^{ld}
\end{eqnarray}
where we have used $\ddt g^{ij} = - 2 \, h^{ij}$ from
Corollary~\ref{metricevolutioncor}. Recall that for a tensor $B_{ij}$
we defined
\begin{equation*}
D(B)_{ijkl} = B_{ip} g^{pq} \Ph_{qjkl} + B_{jp} g^{pq} \Ph_{iqkl}
+ B_{kp} g^{pq} \Ph_{ijql} + B_{lp} g^{pq} \Ph_{ijkq}
\end{equation*}
Let us define a similar shorthand notation $D_m (B)$ to denote
\begin{equation*}
D_m(B)_{ijkl} = B_{ip} g^{pq} \nab{m} \Ph_{qjkl} + B_{jp} g^{pq}
\nab{m} \Ph_{iqkl} + B_{kp} g^{pq} \nab{m} \Ph_{ijql} + B_{lp} g^{pq}
\nab{m} \Ph_{ijkq}
\end{equation*}
Then Lemma~\ref{nabPhevolutionlemma} says that
\begin{equation} \label{nabPhievolutionshorteq}
\ddt \nab{m} \Ph_{ijkl} = D_m ( A ) + D ( B )
\end{equation}
where we define
\begin{equation} \label{Bdefneq}
B_{\alpha \beta} = \nab{\beta} h_{\alpha m} - \nab{\alpha} 
h_{\beta m} + \nab{m} X_{\alpha\beta}
\end{equation}
We also have $\ddt \Ph = D(A)$. Therefore~\eqref{torsionevolutiontempeq}
becomes
\begin{eqnarray} \nonumber
\ddt T_{m;\alpha\beta} & = & \frac{1}{96} D_m (A)_{\alpha jkl}
\Ph_{\beta bcd} g^{jb} g^{kc} g^{ld} + \frac{1}{96} D (B)_{\alpha jkl}
\Ph_{\beta bcd} g^{jb} g^{kc} g^{ld} \\ \label{torsionevolutiontempeq2}
& & {} + \frac{1}{96} (\nab{m} \Ph_{\alpha jkl}) D(A)_{\beta bcd}
g^{jb} g^{kc} g^{ld} - \frac{6}{96} (\nab{m}
\Ph_{\alpha jkl}) \Ph_{\beta bcd} h^{jb} g^{kc} g^{ld}
\end{eqnarray}
We will break up the computation into several manageable pieces.
First, we need the following identity. If $A = h + X \in S^2 \oplus
\wtws$, then:
\begin{eqnarray} \nonumber
D(A)_{ijkl} \Ph_{abcd} g^{kc} g^{ld} & = & 4 \, h_{ia} g_{jb}
- 4 \, h_{ib} g_{ja} + 4 \, h_{jb} g_{ia} - 4 \, h_{ja} g_{ib} +
2 \tr_g(h) (g_{ia} g_{jb} - g_{ib} g_{ja} - \Ph_{ijab} )\\ \nonumber 
{} & & + 16 \, X_{ia} g_{jb} - 16 \, X_{ib} g_{ja} + 16 \, X_{jb} g_{ia}
- 16 \, X_{ja} g_{ib} \\ & & {} - 2 \, A_{ip} g^{pq} \Ph_{qjab}
+ 2 \, A_{jp} g^{pq} \Ph_{qiab} + 2 \, A_{pa} g^{pq} \Ph_{qbij}
- 2 \, A_{pb} g^{pq} \Ph_{qaij} \label{tempidentityeq}
\end{eqnarray}
which can be proved using Lemma~\ref{SPidentitieslemma}. Also, it
is easy to check that if $X$ and $Y$ are both in $\wtws$, then
\begin{equation} \label{tempidentityeq2}
X_{ip} g^{pq} Y_{qj} g^{ia} g^{jb} \Ph_{abkl} = X_{kp} g^{pq} Y_{ql}
- Y_{kp} g^{pq} X_{ql}
\end{equation}
which essentially says that the Lie bracket of two elements of
$\wtws$ is always in $\wtwt$. Now using
the identities~\eqref{tempidentityeq} and~\eqref{tempidentityeq2}, and
Lemma~\ref{SPidentitieslemma} again, along with some patience, one can
establish the following four expressions:
\begin{eqnarray*}
D(A)_{\alpha jkl} \Ph_{\beta bcd} g^{jb} g^{kc} g^{ld} & = & 
24 \, h_{\alpha\beta} + 18 \tr_g(h) \, g_{\alpha\beta} + 
96 \,X_{\alpha\beta} \\ D_m(A)_{\alpha jkl} \Ph_{\beta bcd}
g^{jb} g^{kc} g^{ld} & = & 48 \left( h_{\alpha p} g^{pq} T_{m;q\beta}
+ h_{\beta p} g^{pq} T_{m;q\alpha} + \tr_g(h) \, T_{m;\alpha\beta}
- g(X, T_m) g_{\alpha\beta} \right) \\ (\nab{m}\Ph_{\alpha jkl})
D(A)_{\beta bcd} g^{jb} g^{kc} g^{ld} & = & 48 \left( -h_{\alpha p}
g^{pq} T_{m;q\beta} - h_{\beta p} g^{pq} T_{m;q\alpha} + \tr_g(h) \,
T_{m;\alpha\beta} + g(X, T_m) g_{\alpha\beta} \right) \\ & & {} + 96
\left( X_{\alpha p} g^{pq} T_{m;q\beta} - X_{\beta p} g^{pq}
T_{m;q\alpha} \right) \\ (\nab{m} \Ph_{\alpha jkl}) \Ph_{\beta bcd}
h^{jb} g^{kc} g^{ld} & = & 16 \left( -h_{\alpha p}
g^{pq} T_{m;q\beta} - h_{\beta p} g^{pq} T_{m;q\alpha} + \tr_g(h) \,
T_{m;\alpha\beta} \right)
\end{eqnarray*}
Now we use the above four expressions to simplify
equation~\eqref{torsionevolutiontempeq2}. We need to substitute
$B$ as defined in~\eqref{Bdefneq} for $A$ when we use the first of these
expressions. After much cancellation and collecting like terms, we are left
with exactly~\eqref{torsionevolutioneq}. 
\end{proof}

We remark that, just as in the \G\ case, the terms with $\nab{}h$ and
with $\nab{}X$ play quite different roles in the evolution of the
torsion tensor in equation~\eqref{torsionevolutioneq}. One hopes
that it is possible to choose $h$ and $X$ in terms of $T$ and possibly
also $\nab{}T$ so that the evolution equations have nice properties.
In particular we would like the equation to be parabolic transverse
to the action of the diffeomorphism group, for short-time existence.
Ideally such a flow exists where the $\mathrm{L}^2$-norm $||T||$
of the torsion decreases. These are questions for future research.

\section{Bianchi-type identity and curvature formulas} \label{bianchisec}

In this section, we apply the evolution
equation~\eqref{torsionevolutioneq} to derive a Bianchi-type identity
for manifolds with \SPs. This yields explicit formulas for the Ricci
tensor and part of the Riemann curvature tensor in terms of the
torsion tensor. As the calculations here are extremely similar to
those in~\cite{K3}, we will be brief.

\begin{prop} \label{firstbianchiprop}
The diffeomorphism invariance of the metric $g$ as a function
of the $4$-form $\Ph$ is equivalent to the vanishing of the
$\wfoo \oplus \wfoth$ component of $\nab{Y}\Ph$ for
any vector field $Y$. This is the fact which was proved earlier
in Lemma~\ref{torsionsymmetrieslemma}.
\end{prop}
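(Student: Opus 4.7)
The plan is to rewrite diffeomorphism invariance infinitesimally and read off the equivalence from Proposition~\ref{metricevolutionprop}. First I would verify the identity
\[
\mathcal{L}_Y \Ph \,=\, \nab{Y}\Ph \,+\, D(\nabla Y),
\]
where $(\nabla Y)_{ij}=\nab{i}Y_j$ and $D$ is the map of~\eqref{DAeq}; this drops out of the coordinate expression for the Lie derivative of a $4$-form since $\nab{}$ is torsion-free. The statement that $g_\Ph$ is diffeomorphism invariant is $\phi_t^* g_\Ph = g_{\phi_t^*\Ph}$ for every diffeomorphism $\phi_t$, and differentiating at $t=0$ along the flow of a vector field $Y$ translates this into the pointwise condition
\[
\mathcal{L}_Y g \,=\, Dg_\Ph(\mathcal{L}_Y\Ph) \qquad \text{for all } Y,
\]
where $Dg_\Ph$ denotes the linearization of $\Ph\mapsto g_\Ph$ on $T_\Ph\adm$.

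The next step is to decompose $\nabla Y$ into its symmetric part $\tfrac{1}{2}\mathcal{L}_Y g$ and its skew part, with the latter splitting further into a $\wtws$ summand and a $\wtwt$ summand, the $\wtwt$ summand lying in $\ker D$. Proposition~\ref{metricevolutionprop} applied to the variation $D(\nabla Y)$ then gives $Dg_\Ph(D(\nabla Y)) = \mathcal{L}_Y g$ immediately. Substituting this into the condition above reduces diffeomorphism invariance to the equivalent pointwise statement
\[
Dg_\Ph(\nab{Y}\Ph) = 0 \qquad \text{for every vector field } Y.
\]

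Finally I would argue that $\nab{Y}\Ph = \mathcal{L}_Y\Ph - D(\nabla Y)$ is a difference of two variations tangent to the bundle $\adm$ of admissible $4$-forms, and therefore lies in $T_\Ph \adm$ itself. Since $\adm_p$ at each point is a single $\mathrm{GL}(8,\R)$-orbit, this tangent space equals $\mathrm{Im}(D|_{S^2 \oplus \wtws}) = \wfoo \oplus \wfos \oplus \wfoth$; in particular $\nab{Y}\Ph$ has no $\wfot$ component. Proposition~\ref{metricevolutionprop} then identifies the restriction of $Dg_\Ph$ to this tangent space as twice the $S^2$-part of $D^{-1}$, so its kernel is precisely $\wfos$. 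Hence $Dg_\Ph(\nab{Y}\Ph)=0$ holds if and only if the $\wfoo \oplus \wfoth$ component of $\nab{Y}\Ph$ vanishes, establishing the equivalence. The main subtlety is recognizing that $\nab{Y}\Ph$ is automatically tangent to the $\mathrm{GL}(8,\R)$-orbit (and hence algebraically detectable by Proposition~\ref{metricevolutionprop}) rather than being a general $4$-form; once this is in hand, everything else is formal.
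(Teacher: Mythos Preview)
Your argument is correct and follows the same overall strategy as the paper: decompose $\mathcal{L}_Y\Ph = \nab{Y}\Ph + D(\nabla Y)$, apply Proposition~\ref{metricevolutionprop} to the $D(\nabla Y)$ piece to recover $\mathcal{L}_Y g$, and conclude that diffeomorphism invariance of $g_\Ph$ is equivalent to $Dg_\Ph(\nab{Y}\Ph)=0$. The paper's own proof is a one-line reference to the $\G$ case, resting on exactly this observation that only the symmetric part of $A$ moves the metric.

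There is one genuine difference worth noting. In your step~5 you give an independent argument that $\nab{Y}\Ph$ lies in $T_\Ph\adm = \wfoo\oplus\wfos\oplus\wfoth$, by writing it as a difference of two tangent vectors to $\adm$ (using that $\phi_t^*\Ph$ stays admissible and that $\adm_p$ is a single $\mathrm{GL}(8,\R)$-orbit). This is a clean and correct way to see that the $\wfot$ component vanishes, and it is considerably simpler than the computational proof via the operator $\Lambda_\Ph$ in Lemma~\ref{torsionsymmetrieslemma}. The paper, by contrast, explicitly remarks that the vanishing of the $\wfot$ component \emph{does not follow} from the argument of this proposition and must be imported from Lemma~\ref{torsionsymmetrieslemma}. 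Both viewpoints are compatible: your extra input is not diffeomorphism invariance of the \emph{metric} but the (tautological) diffeomorphism invariance of the admissible bundle $\adm$ together with its description as a $\mathrm{GL}$-orbit. So the paper's remark stands---invariance of $g_\Ph$ alone does not kill the $\wfot$ piece---but you have identified a separate, soft reason why that piece is absent, which is a nice observation in its own right.
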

\begin{proof}
The proof is identical to the \G\ case. In both cases it is due to the
fact that the evolution of the metric $g$ depends only on the symmetric
part $h$ of $A = h + X$. Notice that in the \SP\ case, there is a stronger
result that the $\wfot$ component of $\nab{Y}\Ph$ also vanishes,
Lemma~\ref{torsionsymmetrieslemma}, which does not follow from here.
\end{proof}

\begin{thm} \label{secondbianchithm}
The diffeomorphism invariance of the torsion tensor $T$ as a function
of the $4$-form $\Ph$ is equivalent to the following identity:
\begin{equation} \label{secondbianchieq}
\boxed{\, \, \nab{q} T_{p;\alpha\beta} - \nab{p} T_{q;\alpha\beta} =
\frac{1}{4} R_{pq\alpha\beta} - \frac{1}{8} R_{pqij} g^{ia} g^{jb}
\Ph_{ij\alpha\beta} + 2 \, T_{q; \alpha m} g^{mn} T_{p; n \beta}
-  2 \, T_{p; \alpha m} g^{mn} T_{q; n \beta} \, \, }
\end{equation}
\end{thm}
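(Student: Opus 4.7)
The strategy is to exploit the naturality of the torsion tensor under diffeomorphisms. Let $Y$ be an arbitrary vector field on $M$ with flow $\varphi_t$. Since $T$ is built naturally from $\Ph$ (and the induced metric and connection), $T[\varphi_t^* \Ph] = \varphi_t^* T[\Ph]$. Differentiating at $t=0$ gives
\begin{equation*}
\mathcal{L}_Y T_{m;\alpha\beta} \;=\; \left.\ddt T_{m;\alpha\beta}\right|_{\ddt \Ph \,=\, \mathcal{L}_Y \Ph}.
\end{equation*}
My plan is to evaluate both sides---using \eqref{torsionevolutioneq} on the right and the standard Lie derivative formula on the left---then to identify the resulting second derivatives of $Y$ with curvature via the Ricci identity, and finally to read off \eqref{secondbianchieq} from the coefficient of $Y$.

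First I would express $\mathcal{L}_Y \Ph$ in the form $D(A)$ of \eqref{generalfloweq}. Using metric compatibility and Definition~\ref{torsiondefn}, a short calculation gives
\begin{equation*}
\mathcal{L}_Y \Ph \;=\; D\bigl(\nabla Y^{\flat} + Y^m T_m\bigr),
\end{equation*}
where $(\nabla Y^{\flat})_{ij} = \nab{i} Y_j$ and $T_m = (T_{m;ij})$. Since $\wtwt \subset \ker D$ by Proposition~\ref{wtwfdescprop}, only the $S^2 \oplus \wtws$ part matters, giving the effective $A = h + X$ with $h_{ij} = \tfrac12(\nab{i} Y_j + \nab{j} Y_i)$ and $X_{ij} = \pi_7\bigl(\tfrac12(\nab{i} Y_j - \nab{j} Y_i)\bigr) + Y^m T_{m;ij}$; the summand $Y^m T_m$ is automatically in $\wtws$ by Definition~\ref{torsiondefn}.

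Next I would substitute this $(h,X)$ into \eqref{torsionevolutioneq} and equate the result with
\begin{equation*}
(\mathcal{L}_Y T)_{m;\alpha\beta} \;=\; Y^p \nab{p} T_{m;\alpha\beta} + (\nab{m} Y^p) T_{p;\alpha\beta} + (\nab{\alpha} Y^p) T_{m;p\beta} + (\nab{\beta} Y^p) T_{m;\alpha p}.
\end{equation*}
The terms involving only $Y$ (without derivatives) or only $\nabla Y$ must cancel amongst themselves; this requires the projection formula \eqref{wtwsprojeq} and the $\wtws$-bracket identity \eqref{tempidentityeq2} to reconcile the $\pi_7$-projected skew part of $\nabla Y^{\flat}$ with the $\nabla Y \cdot T$ terms from the Lie derivative. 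All second-derivative terms enter only through $\pi_7\bigl(\nab{\beta} h_{\alpha m} - \nab{\alpha} h_{\beta m} + \nab{m} X_{\alpha\beta}\bigr)$, and skew-symmetrizing the covariant-derivative pairs appearing there converts them into curvature contracted with $Y$ via the Ricci identity $[\nab{i},\nab{j}] Y_k = -R_{ijk}{}^{l} Y_l$. Because the resulting relation must hold for every $Y$, the coefficient of $Y^r$ at each point must vanish, and that coefficient is precisely \eqref{secondbianchieq}.

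The main obstacle is the bookkeeping. Working at a single point in geodesic normal coordinates eliminates Christoffel-symbol clutter and reduces the task to separating the terms by homogeneity in $Y$, $\nabla Y$, and $\nabla^2 Y$. The crucial algebraic step is that the $\pi_7$ projection \eqref{wtwsprojeq} applied to the raw curvature $R_{pq\alpha\beta}$ (skew in $\alpha,\beta$) produces exactly $\tfrac14 R_{pq\alpha\beta} - \tfrac18 R_{pqij} g^{ia} g^{jb} \Ph_{ab\alpha\beta}$, while the quadratic combination $T_{q;\alpha m} g^{mn} T_{p;n\beta} - T_{p;\alpha m} g^{mn} T_{q;n\beta}$ emerges with coefficient $2$ from the $Y^m T_m$ contribution to $A$ fed into the first two terms of \eqref{torsionevolutioneq}, after invoking \eqref{tempidentityeq2} to reorganize the products of $\wtws$ elements. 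This matches \eqref{secondbianchieq} exactly.
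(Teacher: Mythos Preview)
Your proposal is correct and follows exactly the approach the paper intends: exploiting the diffeomorphism invariance of $T$ by feeding $\mathcal{L}_Y\Ph = D(\nabla Y^{\flat} + Y^m T_m)$ into the torsion evolution equation~\eqref{torsionevolutioneq} and comparing with the direct Lie-derivative expression, then using the Ricci identity to convert second derivatives of $Y$ into curvature. The paper itself does not spell out the computation, referring instead to the analogous \G\ argument in~\cite{K3}, so your sketch in fact supplies more detail than the paper's own proof; the paper also notes (as an aside) that~\eqref{secondbianchieq} can alternatively be verified directly from~\eqref{torsioneq}, Lemma~\ref{SPidentitieslemma}, and the Ricci identities, bypassing the flow argument entirely.
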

\begin{proof}
The proof is very similar to the analogous result for \Gs s described
in~\cite{K3}, and is left to the reader. The
identity~\eqref{secondbianchieq} can also be established directly by
using~\eqref{torsioneq}, Lemma~\ref{SPidentitieslemma}, and
the Ricci identities.
\end{proof}

We now examine some consequences of Theorem~\ref{secondbianchithm}.
For $i$ and $j$ fixed, the Riemann curvature tensor $R_{ijkl}$ is
skew-symmetric in $k$ and $l$. Hence we can use the decomposition
of $\Omega^2$ to write it as
\begin{equation*}
R_{ijkl} = (\pi_7 (\mathrm{Riem}))_{ijkl} + (\pi_{21}
(\mathrm{Riem}))_{ijkl}
\end{equation*}
where by equation~\eqref{wtwsprojeq}, we have
\begin{equation} \label{riem7eq}
(\pi_7 (\mathrm{Riem}))_{ijkl} = \frac{1}{4} R_{ijkl} - \frac{1}{8}
R_{ijab} g^{ap} g^{bq} \Ph_{pqkl}
\end{equation}
Therefore the identity~\eqref{secondbianchieq} says that
\begin{equation} \label{pi7riemanneq}
(\pi_7 (\mathrm{Riem}))_{pq\alpha\beta} = \nab{q} T_{p;\alpha\beta} -
\nab{p} T_{q;\alpha\beta} + 2 \left( T_{p; \alpha m} g^{mn} T_{q; n \beta}
- T_{q; \alpha m} g^{mn} T_{p; n \beta} \right)
\end{equation}

\begin{cor} \label{ambrosesingercor}
If $\Ph$ is torsion-free, then the Riemann curvature tensor
$R_{ijkl} \in S^2 (\Omega^2)$ actually takes values in $S^2(\wtwt)$, where
$\wtwt \cong \mathfrak{so}(7)$, the Lie algebra of $\SP$.
\end{cor}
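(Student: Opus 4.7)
The plan is to read off the statement directly from the identity \eqref{pi7riemanneq} proved via Theorem~\ref{secondbianchithm}, combined with the standard pair-swap symmetry of the Riemann tensor and the characterization of $\wtwt$ given in Remark~\ref{liesrmk}.

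First, I would specialize \eqref{pi7riemanneq} to the torsion-free case. If $\Ph$ is torsion-free then $T \equiv 0$, so both $\nab{}T$ and the quadratic terms $T \cdot T$ on the right-hand side vanish. Consequently
\begin{equation*}
(\pi_7(\mathrm{Riem}))_{pq\alpha\beta} = 0 \qquad \text{for all } p,q,\alpha,\beta,
\end{equation*}
where the projector $\pi_7$ acts on the last two indices $(\alpha,\beta)$ of $R$. By the definition of $\pi_7$ in equation~\eqref{wtwsprojeq}, this means that for every fixed pair $(p,q)$, the 2-form $\alpha,\beta \mapsto R_{pq\alpha\beta}$ lies in the complementary subspace $\wtwt$ of $\Omega^2$.

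Next I would invoke the Riemann tensor's pair-exchange symmetry $R_{pq\alpha\beta} = R_{\alpha\beta pq}$. Applying the previous step to the swapped tensor immediately gives that the 2-form $p,q \mapsto R_{pq\alpha\beta}$ also lies in $\wtwt$ for every fixed $(\alpha,\beta)$. Hence $R$ is a section of $\wtwt \otimes \wtwt$, and pair-exchange symmetry upgrades this to $R \in S^2(\wtwt)$. Finally, Remark~\ref{liesrmk} identifies $\wtwt \cong \lies$, the Lie algebra of $\SP$, which completes the proof.

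There is no real obstacle; the work has been done in establishing Theorem~\ref{secondbianchithm} and Remark~\ref{liesrmk}. The only mild subtlety worth flagging is making sure that the $\pi_7$ in \eqref{pi7riemanneq} acts on the correct index pair, and that the pair-exchange symmetry is then genuinely needed to transfer the conclusion from one pair to the other — a single application of \eqref{pi7riemanneq} only controls half of the decomposition. Note also that this Corollary is a special case of the Ambrose–Singer holonomy theorem ($\hol \subseteq \SP$ forces $R$ to take values in $S^2(\mathfrak{spin}(7))$), but the argument above derives it intrinsically from the $\Spin(7)$ Bianchi identity without invoking that general machinery.
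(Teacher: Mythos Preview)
Your argument is correct and matches the paper's proof essentially line for line: set $T=0$ in~\eqref{pi7riemanneq} to force $R_{pq\alpha\beta}\in\wtwt$ in the $(\alpha,\beta)$ slot, then use the pair symmetry $R_{ijkl}=R_{klij}$ to upgrade to $S^2(\wtwt)$. The only cosmetic difference is that the paper leaves the identification $\wtwt\cong\lies$ to the statement rather than citing Remark~\ref{liesrmk} explicitly.
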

\begin{proof}
Setting $T = 0$ in~\eqref{pi7riemanneq} shows the for fixed $i$, $j$, we
have $R_{ijkl} \in \wtwt$ as a skew-symmetric tensor in $k$, $l$.
The result now follows from the symmetry $R_{ijkl} = R_{klij}$.
\end{proof}

\begin{rmk} \label{ambrosesingerrmk}
This result is well-known. When $T=0$, the Riemannian holonomy of the
metric $g_{\Ph}$ is contained
in the group $\SP$. By the Ambrose-Singer holonomy theorem, the Riemann
curvature tensor of the metric is thus an element of
$S^2 (\mathfrak{so}(7))$.
\end{rmk}

\begin{lemma} \label{riccilemma}
Let $Q_{ijkl} = R_{ijab} g^{ap} g^{bq} \Ph_{pqkl}$. Then we have $Q_{ijkl}
g^{il} = 0$.
\end{lemma}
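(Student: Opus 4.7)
The plan is a direct coordinate computation combining the first Bianchi identity for $R$ with the algebraic identities for $\Ph$ collected in Lemma~\ref{SPidentitieslemma}. Starting from
$Q_{ijkl}g^{il} = g^{il} R_{ij}{}^{ab}\Ph_{abkl}$,
I would first apply the first Bianchi identity $R_{ijab} + R_{iabj} + R_{ibja} = 0$ to rewrite $R_{ij}{}^{ab}$. Using the pair-swap symmetry of $R$, the skew-symmetry of $R$ in each pair, and the skew-symmetry of $\Ph$ in the two contracted indices $(a,b)$, a short rearrangement (swap the two dummies $a\leftrightarrow b$ in one summand, use $\Ph$-skew and $R$-skew) shows that the two cyclic Bianchi contributions are equal, so the substitution collapses to
$g^{il}Q_{ijkl} = -2\,g^{il} R_{iabj}\Ph^{ab}{}_{kl}$.

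Any further invocation of the first Bianchi identity on this new expression merely reproduces the same relation up to sign, yielding a tautology; hence the first Bianchi identity alone is insufficient. The decisive additional ingredient is the $\SP$-specific two-index contraction identity
$\Ph_{ab}{}^{cd}\Ph_{cdij} = 6(g_{ai}g_{bj}-g_{aj}g_{bi}) - 4\Ph_{abij}$,
which is among the identities of Lemma~\ref{SPidentitieslemma} and which may be derived from Proposition~\ref{w23prop} by the minimal-polynomial relation $(L+6)(L-2)=0$ of the operator $L\colon\beta\mapsto\beta^{ab}\Ph_{abij}$ on $\Omega^2$ (whose eigenvalues on $\wtws$ and $\wtwt$ are $-6$ and $+2$ by Proposition~\ref{w23prop}). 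Combined with the elementary observation that any metric trace of two indices of $\Ph$ vanishes (since $\Ph$ is totally antisymmetric while $g$ is symmetric, for instance $g^{al}\Ph_{abkl}=0$), this identity reduces all remaining $R\!\cdot\!\Ph$ contractions to Ricci-type traces $g^{il}R_{ijkl}=-R_{jk}$, whose coefficients then precisely cancel to produce $Q_{ijkl}g^{il}=0$.

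The main obstacle is the bookkeeping of indices: the first Bianchi and the $\Ph$-contraction identity must be combined in the right order so that $Q$-type terms are eliminated rather than reintroduced. The computation is purely algebraic but somewhat intricate, in the spirit of the analogous curvature identities for \Gs s in~\cite{K3}; as the author remarks in the introduction, such calculations are often omitted, and one expects the argument here to be similarly terse.
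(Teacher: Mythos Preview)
Your proposal contains a genuine gap: the assertion that ``the first Bianchi identity alone is insufficient'' is false, and the proposed remedy via the two--index contraction $\Ph_{ab}{}^{cd}\Ph_{cdij}=6(g_{ai}g_{bj}-g_{aj}g_{bi})-4\Ph_{abij}$ cannot be applied as stated, since the expression $g^{il}R_{iabj}\Ph^{ab}{}_{kl}$ involves only \emph{one} copy of $\Ph$. You never explain how a second $\Ph$ is to be introduced, so the claimed reduction ``to Ricci-type traces whose coefficients then precisely cancel'' has no content.

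The point you missed is \emph{which} triple of indices to cycle. You apply Bianchi to $R_{ijab}$ on $(j,a,b)$, but $j$ is a free index; this is why you land in a tautology. After the trace with $g^{il}$, the three $R$-indices that are contracted against the totally antisymmetric tensor $\Ph$ are $i,a,b$ (positions $1,3,4$ of $R_{ijab}$). Working in an orthonormal frame,
\[
Q_{ijkl}g^{il}=\sum_{i,a,b} R_{ijab}\,\Ph_{abki},
\]
and since $\Ph_{abki}$ is alternating in $\{a,b,i\}$, one may replace $R_{ijab}$ by its antisymmetrization over $\{i,a,b\}$. A short check using pair-swap and the standard Bianchi relation gives
\[
R_{ijab}+R_{ajbi}+R_{bjia}=0,
\]
hence $Q_{ijkl}g^{il}=0$ immediately. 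Equivalently, relabelling the dummy triple $(i,a,b)$ cyclically produces three equal copies of the sum whose total vanishes by Bianchi. This is exactly the mechanism behind the paper's one-line proof (``identical to the \G\ case''), where the same argument is used with the alternating $3$-form or $4$-form in place of $\Ph$; no $\Ph\!\cdot\!\Ph$ identity is needed.
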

\begin{proof}
This is identical to the \G\ case proved in~\cite{K3}.
\end{proof}

From Lemma~\ref{riccilemma} and equation~\eqref{riem7eq}, we see that
the Ricci tensor $R_{jk}$ can be expressed as
\begin{equation} \label{riccitempeq}
R_{jk} = R_{ijkl} g^{il} = 4 \, (\pi_7 (\mathrm{Riem}))_{ijkl} g^{il}
\end{equation}

\begin{prop} \label{ricciprop}
Given a \SPs\ $\Ph$ with torsion tensor $T_{m;\alpha\beta}$, its
associated metric $g$ has Ricci curvature $R_{jk}$ given by
\begin{equation*}
R_{jk} = 4 \, g^{il} \nab{i} T_{j;lk} - 4 \, \nab{j} (g^{il} T_{i;lk} )
+ 8 \, T_{j;mk} T_{i; nl} g^{mn} g^{il} - 8 \, T_{j;ml} T_{i;nk} g^{mn}
g^{il}
\end{equation*}
\end{prop}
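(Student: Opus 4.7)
The plan is to combine the two main tools already built up in this section: the contraction formula~\eqref{riccitempeq} that recovers the Ricci tensor from the $\pi_7$-component of Riemann, and the explicit formula~\eqref{pi7riemanneq} for $(\pi_7(\mathrm{Riem}))_{pq\alpha\beta}$ in terms of the torsion. The proof is essentially a controlled contraction, with the only subtlety being bookkeeping of signs using the skew-symmetry of $T_{m;\alpha\beta}$ in the pair $(\alpha,\beta)$ (which holds because $T_m \in \wtws \subset \Omega^2$ for each fixed $m$, by Definition~\ref{torsiondefn}).

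Concretely, I would begin from equation~\eqref{riccitempeq}, namely
\begin{equation*}
R_{jk} = 4 \, g^{il} (\pi_7(\mathrm{Riem}))_{ijkl},
\end{equation*}
relabel the indices in~\eqref{pi7riemanneq} by $(p,q,\alpha,\beta) \mapsto (i,j,k,l)$, and substitute. This produces
\begin{equation*}
R_{jk} = 4\, g^{il}\nab{j}T_{i;kl} - 4\, g^{il}\nab{i}T_{j;kl} + 8\, g^{il}\bigl( T_{i;km} g^{mn} T_{j;nl} - T_{j;km} g^{mn} T_{i;nl}\bigr).
\end{equation*}
Now I would use the skew-symmetry $T_{m;\alpha\beta} = -T_{m;\beta\alpha}$ to swap the last two indices of each torsion factor, turning every $T_{\bullet;kl}$ into $-T_{\bullet;lk}$ and every $T_{\bullet;km}$ into $-T_{\bullet;mk}$, etc. The two sign flips on the quadratic terms cancel, while each of the derivative terms picks up one sign flip; this matches the form stated in the proposition after renaming dummies in the quadratic terms via symmetry of $g^{mn}$ and $g^{il}$. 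Finally, since $g$ is covariantly constant, $g^{il}\nab{j}T_{i;lk} = \nab{j}(g^{il}T_{i;lk})$, which accounts for the parenthesization in the second term of the stated formula.

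The only conceptual point is to observe that no contribution from $\pi_{21}(\mathrm{Riem})$ appears in $R_{jk}$: by Lemma~\ref{riccilemma}, $R_{ijab}g^{ap}g^{bq}\Ph_{pqkl}$ vanishes upon contracting $i$ with $l$, so in~\eqref{riem7eq} the $\Ph$-term drops out, leaving precisely the factor of $4$ used above. The remaining work is routine index juggling; I expect no genuine obstacle beyond the sign bookkeeping, which is exactly why the author defers it to a citation of the analogous \G\ argument in~\cite{K3}.
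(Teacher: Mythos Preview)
Your proposal is correct and follows exactly the approach the paper takes: the paper's proof reads ``This follows immediately from equations~\eqref{pi7riemanneq} and~\eqref{riccitempeq},'' and your write-up is precisely the unwinding of that contraction, with the skew-symmetry of $T_{m;\alpha\beta}$ and $\nabla g = 0$ handling the cosmetic index rearrangements. Your remark about Lemma~\ref{riccilemma} eliminating the $\pi_{21}$ contribution is already absorbed into~\eqref{riccitempeq}, so nothing further is needed.
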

\begin{proof}
This follows immediately from equations~\eqref{pi7riemanneq}
and~\eqref{riccitempeq}.
\end{proof}

\begin{cor} \label{ricciflatcor}
The metric of a torsion-free \SPs\ is necessarily Ricci-flat.
This is classical, originally proved by Bonan. Here we see a
direct proof of this fact.
\end{cor}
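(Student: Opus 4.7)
The plan is to read off the result directly from Proposition~\ref{ricciprop}. A torsion-free \SPs\ is by definition one for which $\nabla \Ph = 0$, and by Definition~\ref{torsiondefn} (or equivalently by Lemma~\ref{torsionlemma}) this is equivalent to the torsion tensor $T_{m;\alpha\beta}$ vanishing identically. Substituting $T \equiv 0$ into the explicit formula
\[
R_{jk} = 4 g^{il} \nab{i} T_{j;lk} - 4 \nab{j}(g^{il} T_{i;lk}) + 8 T_{j;mk} T_{i;nl} g^{mn} g^{il} - 8 T_{j;ml} T_{i;nk} g^{mn} g^{il}
\]
kills every term on the right-hand side: the first two are linear in $T$ or $\nabla T$ (and $\nabla T = 0$ when $T = 0$, since $T$ vanishes as a tensor field), while the last two are quadratic in $T$. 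Hence $R_{jk} = 0$.

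There is essentially no obstacle here; the entire content of the corollary has already been packaged into Proposition~\ref{ricciprop}, whose derivation in turn rested on the second Bianchi-type identity of Theorem~\ref{secondbianchithm} together with Lemma~\ref{riccilemma}. The one minor point worth noting in the write-up is that $T \equiv 0$ as a tensor field on $M$ automatically implies $\nabla T \equiv 0$, so one does not need to worry separately about the derivative terms. Thus the proof is a single line of substitution, and the corollary follows.
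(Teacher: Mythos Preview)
Your proposal is correct and matches the paper's approach exactly: the corollary is immediate from Proposition~\ref{ricciprop} by setting $T \equiv 0$, and the paper does not even supply a separate proof environment, simply noting that the formula makes the Ricci-flatness manifest. Your added remark that $T \equiv 0$ forces $\nabla T \equiv 0$ is a harmless clarification.
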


\begin{rmk} \label{ivanovrmk}
Some formulas relating the Ricci tensor and the torsion on a manifold
with \SPs\ have also been obtained by Ivanov in~\cite{I}.
\end{rmk}

\appendix

\section{Identites in \SP-geometry}
\label{spin7identitiessec}

In this appendix we collect several identities involving the
$4$-form $\Ph$ of a \SPs. They are derived by methods analogous to those
for the \G\ case as explained in~\cite{K3}, so we omit the proofs.

In local coordinates $x^1, x^2, \ldots, x^8$, the 4-form $\Ph$ is
\begin{equation*}
\Ph = \frac{1}{24} \Ph_{ijkl} \, dx^i \wedge dx^j \wedge dx^k
\wedge dx^l
\end{equation*}
where $\Ph_{ijkl}$ is totally skew-symmetric. The metric is given by
$g_{ij} = g( \ddx{i}, \ddx{j})$.

\begin{lemma} \label{SPidentitieslemma}
The following identities hold:
\begin{eqnarray*}
\Ph_{ijkl} \Ph_{abcd} g^{ia} g^{jb} g^{kc} g^{ld} & = & 336 \\
\Ph_{ijkl} \Ph_{abcd} g^{jb} g^{kc} g^{ld} & = & 42 g_{ia} \\
\Ph_{ijkl} \Ph_{abcd} g^{kc} g^{ld} & = & 6 g_{ia} g_{jb} - 6
g_{ib} g_{ja} - 4 \Ph_{ijab} \\ \Ph_{ijkl} \Ph_{abcd} g^{ld} & = &
g_{ia} g_{jb} g_{kc} + g_{ib} g_{jc} g_{ka} + g_{ic}
g_{ja} g_{kb} \\ & & {} - g_{ia} g_{jc} g_{kb} - g_{ib} g_{ja}
g_{kc} - g_{ic} g_{jb} g_{ka} \\ & & {} -g_{ia} \Ph_{jkbc} -
g_{ja} \Ph_{kibc} - g_{ka} \Ph_{ijbc} \\ & &  {} -g_{ib} \Ph_{jkca} -
g_{jb} \Ph_{kica} - g_{kb} \Ph_{ijca} \\ & & {} -g_{ic} \Ph_{jkab} -
g_{jc} \Ph_{kiab} - g_{kc} \Ph_{ijab} 
\end{eqnarray*}
\end{lemma}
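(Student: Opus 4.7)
The strategy is to prove the four identities in sequence, exploiting the fact that they form a cascade: the first three can each be obtained from the fourth (the deepest) by contracting with an additional factor of the inverse metric. Both sides of every identity are $\SP$-equivariant tensors built from $g$ and $\Ph$ alone, and at any point $p \in M$ the pair $(g_p, \Ph_p)$ is equivalent via an element of $\mathrm{GL}(8, \R)$ to a fixed standard model $(g_0, \Ph_0)$ on $\R^8$, with $\Ph_0$ the Cayley $4$-form. Hence it suffices to verify each identity once, at the origin of $\R^8$, in an explicit orthonormal basis adapted to $\Ph_0$.

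Assuming the fourth identity, the other three follow by successive contractions. Contracting with $g^{kc}$ turns the fourth identity into the third: the ``generalized Kronecker $3 \times 3$ determinant'' collapses via $g^{kc} g_{kc} = 8$ and $g^{kc} g_{ic} = \delta^k_i$ to give $6 g_{ia} g_{jb} - 6 g_{ib} g_{ja}$, and of the nine $g_{\cdot \cdot} \Ph_{\cdot \cdot \cdot \cdot}$ terms, six vanish because $\Ph$ is skew in the two contracted indices while $g^{kc}$ is symmetric, and the remaining three combine to $-4 \Ph_{ijab}$. Contracting the third identity with $g^{jb}$ gives $(48 - 6) g_{ia} = 42 g_{ia}$ (the $\Ph$-contribution again vanishes by skew-symmetry), which is the second identity. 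A final contraction with $g^{ia}$ gives $42 \cdot 8 = 336$, the first identity.

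The core task is therefore to prove the fourth identity directly. I would first use $\SP$-representation theory to constrain the form of the RHS. The $\SP$-invariant subspace of tensors with six free indices skew in $(i,j,k)$ and in $(a,b,c)$ is small, and is spanned by the generalized Kronecker determinant in the two index groups together with the nine skew-symmetrized $g_{\cdot \cdot} \Ph_{\cdot \cdot \cdot \cdot}$ combinations appearing in the statement. The numerical coefficients ($+1$ for the determinant terms and $-1$ for the $g \cdot \Ph$ terms) are then pinned down by evaluating both sides on a small number of distinguished index tuples in the standard Cayley basis: a ``diagonal'' tuple such as $(i,j,k) = (a,b,c)$ fixes the determinant coefficient, while a tuple for which only a single $\Ph$-type term on the RHS survives fixes the coefficient of the $g \cdot \Ph$ terms.

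The main obstacle is organizational rather than conceptual. While each elementary step is straightforward, verifying the coefficients of the fourth identity requires careful bookkeeping of the fourteen nonzero monomials (up to symmetry) of the Cayley form, together with their signs, across many index orbits. It is precisely for this reason that the author omits the computation, invoking the completely analogous treatment of the $\G$ case in~\cite{K3}.
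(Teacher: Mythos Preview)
Your approach is correct and is exactly the standard one the paper has in mind: since the paper omits the proof entirely and simply refers to the analogous $\G$ computation in~\cite{K3}, your plan of reducing to the standard Cayley model on $\R^8$, using $\SP$-invariance to constrain the form of the single-contraction identity, and then fixing the two undetermined coefficients by evaluation at specific index tuples, matches that treatment. Your cascade argument deriving the first three identities from the fourth by successive contractions is also correct.

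One small arithmetic correction in your contraction step from the fourth identity to the third: of the nine $g_{\cdot\cdot}\Ph_{\cdot\cdot\cdot\cdot}$ terms, only \emph{four} vanish by the skew/symmetric argument (those in which both contracted indices $k$ and $c$ sit inside $\Ph$), not six. The remaining \emph{five} terms contribute $\Ph_{ijab} + \Ph_{ijab} + \Ph_{ijab} + \Ph_{ijab} - 8\,\Ph_{ijab} = -4\,\Ph_{ijab}$, so your stated total $-4\,\Ph_{ijab}$ is still right. It is also worth remarking that your representation-theoretic claim about the invariant subspace is justified by the decomposition $\Omega^3 = \wthe \oplus \wthf$ into irreducibles: the $\SP$-invariants in $\Omega^3 \otimes \Omega^3$ are then exactly two-dimensional, spanned by the generalized Kronecker determinant and the single skew-symmetrized $g\cdot\Ph$ combination, so the fourth identity really does reduce to determining two constants.
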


\begin{prop} \label{SPderivativeidentitiesprop}
The following identities hold:
\begin{eqnarray*}
( \nab{m} \Ph_{ijkl} ) \Ph_{abcd} g^{ia} g^{jb} g^{kc} g^{ld} & = & 0 \\
(\nab{m} \Ph_{ijkl}) \Ph_{abcd} g^{jb} g^{kc} g^{ld} & = &
- \Ph_{ijkl} (\nab{m} \Ph_{abcd} ) g^{jb} g^{kc} g^{ld} \\
(\nab{m} \Ph_{ijkl} ) \Ph_{abcd} g^{kc} g^{ld} & = & - \Ph_{ijkl} 
(\nab{m} \Ph_{abcd} ) g^{kc} g^{ld} - 4 \, \nab{m} \Ph_{ijab}
\end{eqnarray*}
\end{prop}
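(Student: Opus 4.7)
The strategy is to obtain each of the three derivative identities by applying the Levi-Civita connection $\nab{m}$ to the corresponding algebraic identity in Lemma~\ref{SPidentitieslemma} and using that $\nabla g = 0$, so the metric contractions pass through the derivative and the inverse-metric factors $g^{jb}$, $g^{kc}$, $g^{ld}$ behave as constants under $\nab{m}$.

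For the first identity, I would differentiate $\Ph_{ijkl}\Ph_{abcd}g^{ia}g^{jb}g^{kc}g^{ld} = 336$. The right-hand side is constant, so its derivative vanishes, while the product rule on the left produces two terms, each with a single factor of $\nab{m}\Ph$. After relabeling the dummy indices $(i,j,k,l) \leftrightarrow (a,b,c,d)$ and using the symmetry of the metric contractions, these two terms are manifestly equal, so their sum equals twice the expression in the identity. This gives the claimed vanishing.

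For the second identity, I would differentiate $\Ph_{ijkl}\Ph_{abcd}g^{jb}g^{kc}g^{ld} = 42\,g_{ia}$. Since $\nab{m}g_{ia} = 0$, the right-hand side contributes nothing, and the product rule on the left gives precisely the two terms whose cancellation is the content of the second identity. For the third identity, the same procedure applied to $\Ph_{ijkl}\Ph_{abcd}g^{kc}g^{ld} = 6g_{ia}g_{jb} - 6g_{ib}g_{ja} - 4\Ph_{ijab}$ kills the $g g$ terms on the right, leaves only $-4\,\nab{m}\Ph_{ijab}$, and splits the left into the two $\nab{m}\Ph \cdot \Ph$ contractions that appear in the statement.

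There is no real obstacle here; the only thing to be careful about is bookkeeping of the dummy indices in the first identity to confirm that the two product-rule terms genuinely coincide (rather than differing by a sign from the skew-symmetry of $\Ph$), and noting in the second and third identities that one must move a term across the equality to match the stated form. All three identities are thus immediate consequences of Lemma~\ref{SPidentitieslemma} together with the compatibility of $\nabla$ with $g$.
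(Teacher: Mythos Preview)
Your proposal is correct and is exactly the approach the paper has in mind: the paper omits the proof entirely, saying only that the identities ``are derived by methods analogous to those for the \G\ case as explained in~\cite{K3},'' and that method is precisely to apply $\nab{m}$ to the corresponding lines of Lemma~\ref{SPidentitieslemma}, use $\nab{}g = 0$, and (for the first identity) relabel dummy indices to identify the two product-rule terms. There is nothing to add.
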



\begin{thebibliography}{9}

\bibitem{BS}R.L. Bryant and S.M. Salamon, ``On the
Construction of Some Complete Metrics with Exceptional Holonomy'',
{\em Duke Math. J.} {\bf 58} (1989), 829-850.

\bibitem{F}M. Fern\'andez, ``A Classification of Riemannian
Manifolds with Structure Group Spin(7)'', {\em Ann. Mat. Pura Appl. (IV)}
{\bf 143} (1986), 101-122.

\bibitem{I}S. Ivanov, ``Connections with torsion, parallel spinors and
geometry of Spin(7) manifolds'', {\em Math. Res. Lett.} {\bf 11} (2004),
171--186.

\bibitem{J1}D.D. Joyce, ``Compact $8$-Manifolds with Holonomy
$\SP$'', {\em Invent. Math.} {\bf 123} (1996), 507-552.

\bibitem{J4}D.D. Joyce, {\em Compact Manifolds with Special
Holonomy}, Oxford University Press, 2000.

\bibitem{K1}S. Karigiannis, ``Deformations of \G\ and \SPs s
on Manifolds'', {\em Canad. J. Math.} {\bf 57} (2005), 1012-1055.

\bibitem{K2}S. Karigiannis, ``Some Notes on \G\ and \SP\
Geometry''; {\em arXiv: math.DG/0608618}.

\bibitem{K3}S. Karigiannis, ``Geometric
Flows on Manifolds with \Gs, I.'', {\em submitted for publication}.

\end{thebibliography}
\end{document}